\theoremstyle{plain}
\newtheorem{theorem}{Theorem}
\newtheorem{remark}[theorem]{Remark}
\newtheorem{lemma}[theorem]{Lemma}
\newtheorem{proposition}[theorem]{Proposition}
\newtheorem{corollary}[theorem]{Corollary}
\newtheorem{problem}[theorem]{Problem}
\newcommand{\tto}{\twoheadrightarrow}
\DeclareMathOperator{\Hom}{Hom}
\begin{document} 

\title{Some homological properties of the category $\mathcal{O}$, II}
\author{Volodymyr Mazorchuk}
\date{}
\maketitle

\begin{abstract}
We show, in full generality, that Lusztig's $\mathbf{a}$-function
describes the projective dimension of both indecomposable tilting
modules and indecomposable injective modules in the regular block 
of the BGG category $\mathcal{O}$, proving a conjecture from the 
first paper. On the way we show that the images of simple modules 
under projective functors can be represented in the derived category 
by linear complexes of tilting modules. These complexes, in turn, can
be interpreted as the images of simple modules under projective
functors in the Koszul dual of the category $\mathcal{O}$.
Finally, we describe the dominant projective module and also
projective-injective modules in some subcategories of
$\mathcal{O}$ and show how one can use categorification to decompose the 
regular representation of the Weyl group into a direct sum of 
cell modules, extending the results known for the symmetric group
(type $A$).
\end{abstract}
\vspace{2mm}

\noindent
{\bf 2000 Mathematics Subject Classification:} 16E10; 16E30; 16G99; 17B10
\vspace{2mm}

\noindent
{\bf Keywords:} category $\mathcal{O}$; tilting module; Lusztig's 
$\mathbf{a}$-function; complex; Koszul dual; categorification;
projective dimension
\vspace{2mm}

\section{Introduction}\label{s0}

Let $\mathfrak{g}$ be a semi-simple complex finite-dimensional Lie algebra
and $\mathcal{O}_0$ the principal block of the BGG category $\mathcal{O}$
for $\mathfrak{g}$ (\cite{BGG}). After the Kazhdan-Lusztig conjecture,
formulated  in \cite{KL}  and proved in \cite{BB,BK}, it became clear that 
many algebraic properties of $\mathcal{O}_0$ can be studied using the
Kazhdan-Lusztig combinatorics (\cite{KL,BjBr}). In the first paper
\cite{Ma2} I conjectured that the projective dimension of an 
indecomposable tilting modules in $\mathcal{O}_0$ is given by 
Lusztig's $\mathbf{a}$-function (\cite{Lu,Lu2}). In \cite{Ma2}  this 
conjecture was proved in the case $\mathfrak{g}=\mathfrak{sl}_n$ (type $A$).
The proof consisted of two parts. In the first part it was shown that 
the projective dimension of an indecomposable tilting module
in $\mathcal{O}_0$ is an invariant of a two-sided cell (this part does
not depend on the type of $\mathfrak{g}$). The second part
was computational, computing the projective dimension for certain 
indecomposable tilting modules, however, the computation was based on 
the Koszul self-duality of $\mathcal{O}_0$ (\cite{So}) and computations
of graded filtrations of certain modules in  $\mathcal{O}_0$ in type
$A$ (\cite{Ir}). Computations in \cite{Ir} and in the subsequent
paper \cite{IS} covered also some special cases for other types and to
these case the arguments from \cite{Ma2} extend naturally. However, from
\cite{IS} it was also known that the arguments from \cite{Ir} and \cite{IS}
certainly do not extend to the general case. Hence, to connect 
tilting modules and Lusztig's $\mathbf{a}$-function in full generality
one had to come up with a completely different approach then the one
I proposed in \cite{Ma2}.

The main objective of the present paper is to prove the mentioned above
conjecture from \cite{Ma2} in full generality. We also prove a similar
conjecture from \cite{Ma2} about the projective dimension of indecomposable
injective modules. The proposed argument 
makes a surprising connection to another part of the paper
\cite{Ma2}. The category $\mathcal{O}_0$ is equivalent to the category
of modules over a finite-dimensional Koszul algebra (\cite{BGG,So}),
in particular, one can consider the corresponding category 
$\mathcal{O}_0^{\mathbb{Z}}$ of graded modules. In this situation an
important role is played by the category of the so-called linear 
complexes of tilting modules (\cite{Ma,MO2,MOS}). A part of \cite{Ma2}
is dedicated to showing that many structural modules from 
$\mathcal{O}_0$ (and from the parabolic subcategories of $\mathcal{O}_0$
in the sense of \cite{RC}) can be described using linear 
complexes of tilting modules. In the present paper we establish yet
another class of such modules, namely, the modules obtained from
simple modules using projective functors (\cite{BG}). In fact, we
even show that with respect to the Koszul self-duality of 
$\mathcal{O}_0$ this class of modules is Koszul self-dual
(other families of Koszul self-dual modules, for example 
shuffled Verma modules, can be found in \cite{Ma2}). After this we show
that certain numerical invariants of those linear complexes of tilting modules,
which represent the images of simple modules under projective functors,
are given in terms of Lusztig's $\mathbf{a}$-function. The conjecture
from \cite{Ma2} follows then using computations in the derived category.

By \cite{BG}, the action of projective functors on $\mathcal{O}_0$
can be considered as a categorification of the right regular 
representation of the Weyl group of $\mathfrak{g}$ (or the corresponding
Hecke algebra in the case of the category $\mathcal{O}_0^{\mathbb{Z}}$,
see \cite{MS4}). The images of simple modules under certain projective 
functors appear in \cite{MS3} for the case $\mathfrak{g}=\mathfrak{sl}_n$ 
as categorical interpretations of elements in 
a certain basis, in which the regular
representation of the symmetric group decomposes into a direct sum of 
cell modules (which are irreducible in type $A$). In the present paper
images of simple modules under projective functors appear naturally
in the general case. So, we extend the above result to the general case, 
generalizing \cite{MS3}, which, in particular, establishes certain
interesting facts about these modules. For example, we show that 
the images of simple modules under projective functors, which appear 
in our picture, have simple head and simple socle. We also confirm
\cite[Conjecture~2]{KM} about the structure of the dominant
projective module in certain subcategories of $\mathcal{O}_0$.

The paper is organized as follows: In Section~\ref{s1} we describe
the setup and introduce all necessary notation. In Section~\ref{s2}
we study the images of simple modules under projective functors,
in particular, we establish their Koszul self-duality and extend
the results from \cite{MS3} to the general case. In Section~\ref{s3}
we prove the conjecture from \cite{Ma2} about the connection 
between the projective dimension of indecomposable tilting modules 
in $\mathcal{O}_0$ and Lusztig's $\mathbf{a}$-function. In Section~\ref{s4}
we prove the conjecture from \cite{Ma2} about the connection 
between the projective dimension of indecomposable injective modules 
in $\mathcal{O}_0$ and Lusztig's $\mathbf{a}$-function.

\vspace{2mm}

\noindent
{\bf Acknowledgments.} The research was partially supported by The 
Royal Swedish Academy of  Sciences and  The Swedish Research Council. 
I would like to thank Henning Haahr Andersen and Catharina Stroppel  
for stimulating discussions.

\section{Preliminaries}\label{s1}

\subsection{Category $\mathcal{O}$}\label{s1.1}

I refer the reader to \cite{BGG,Hu,Ma2,MS4} for more details
on the category $\mathcal{O}$ and the notation I will use. 
Let $W$ denote the Weyl group of $\mathfrak{g}$ and $w_0$ be
the longest element of $W$. Denote by $\mathtt{A}$
the basic finite-dimensional associative algebra, whose category 
$\mathtt{A}\text{-}\mathrm{mod}$ of (left) finite-dimensional modules is 
equivalent to $\mathcal{O}_0$ (\cite{BGG}). We fix the Koszul grading
$\displaystyle\mathtt{A}=\bigoplus_{i\geq 0} \mathtt{A}_i$ 
on $\mathtt{A}$ (\cite{So,BGS})
and denote by $\mathtt{A}\text{-}\mathrm{gmod}$ the category 
of finite-dimensional graded $\mathtt{A}$-modules with degree zero
morphisms (this category is equivalent to the category 
$\mathcal{O}_0^{\mathbb{Z}}$ mentioned above). For $k\in\mathbb{Z}$ 
we denote by $\langle k\rangle$ the autoequivalence of
$\mathtt{A}\text{-}\mathrm{gmod}$, which shifts (decreases) the
degree of homogeneous components of a graded module by $k$.

Simple modules in $\mathcal{O}_0$ are indexed by elements of $W$
in the natural way. For $w\in W$ we denote by 
$L(w)$ the simple graded $\mathtt{A}$-module corresponding to $w$, 
concentrated in degree zero (here $L(e)$ corresponds to the trivial
module in $\mathcal{O}_0$ and $L(w_0)$ corresponds to the simple
Verma module in $\mathcal{O}_0$). We denote by $P(w)$ the projective 
cover of  $L(w)$ in $\mathtt{A}\text{-}\mathrm{gmod}$; by $I(w)$ the 
injective envelop of  $L(w)$ in $\mathtt{A}\text{-}\mathrm{gmod}$; 
by $\Delta(w)$ the standard quotient of $P(w)$ (a Verma module) and  
by $\nabla(w)$ the costandard submodule of $I(w)$ (a dual Verma module). 
Finally, we
denote by $T(w)$ the indecomposable tilting module, corresponding
to $w$, whose grading is uniquely determined by the condition that 
$\Delta(w)$ is a submodule of $T(w)$.

For $w\in W$ we denote by $\theta_w$ the graded version (\cite{St}) 
of the indecomposable projective endofunctor of
$\mathtt{A}\text{-}\mathrm{gmod}$ corresponding to $w$ (\cite{BG}). 
The functor $\theta_w$ is normalized by the
condition $\theta_w P(e)\cong P(w)$ (as graded modules).
The functor $\theta_w$ is both left and right adjoint to 
$\theta_{w^{-1}}$.

Denote by $D^b(\mathtt{A})$ the bounded derived category of
$\mathtt{A}\text{-}\mathrm{gmod}$ and by 
$\llbracket k\rrbracket$, $k\in\mathbb{Z}$,
the autoequivalence of $D^b(\mathtt{A})$, which shifts complexes by
$k$ positions to the left. We denote by $\mathfrak{LT}$ the full
subcategory of $D^b(\mathtt{A})$, which consists of all complexes
\begin{displaymath}
\mathcal{X}^{\bullet}:\quad\quad\quad
\dots \rightarrow \mathcal{X}^{-1}\rightarrow 
\mathcal{X}^{0}\rightarrow \mathcal{X}^{1}
\rightarrow \dots
\end{displaymath}
such that for every $i\in\mathbb{Z}$ the module $\mathcal{X}^i$ is isomorphic
to a direct sum of modules of the form $T(w)\langle i\rangle$, $w\in W$.
The category $\mathfrak{LT}$ is abelian with enough projectives,
moreover, there is an equivalence of categories  $\Phi:\mathfrak{LT}\to
\mathtt{A}\text{-}\mathrm{gmod}$  (\cite{MO2,Ma}). From 
\cite[Theorem~3.3(1)]{Ma} we have that $\Phi$ sends the indecomposable 
tilting module $T(w)$, $w\in W$, considered as a linear complex 
concentrated in position zero, to the simple module $L(w_0w^{-1}w_0)$.

We denote by $\star:D^b(\mathtt{A})\to D^b(\mathtt{A})$ the usual 
contravariant autoequivalence preserving isoclasses of simple modules 
concentrated in degree zero (duality). All projective functors 
commute with $\star$. For $M\in 
\mathtt{A}\text{-}\mathrm{gmod}$, $M\neq 0$, we set
\begin{displaymath}
\min(M)=\min\{i\in\mathbb{Z}:M_i\neq 0\},\quad\quad 
\max(M)=\max\{i\in\mathbb{Z}:M_i\neq 0\}.
\end{displaymath}

For $w\in W$ we denote by $\mathtt{T}_w:\mathtt{A}\text{-}\mathrm{gmod}
\to \mathtt{A}\text{-}\mathrm{gmod}$ the corresponding Arkhipov's
twisting functor (see \cite{AS,KhMa} for the ungraded version
and \cite[Appendix]{MO2} for the graded version).

\subsection{Kazhdan-Lusztig combinatorics}\label{s1.2}

Here I refer the reader to \cite[Section~3]{MS4}, \cite{So2} and 
\cite{BjBr} for details. Let $S$ be the set of simple reflections in $W$ and 
$l:W\rightarrow\mathbb{Z}$ be the length function with respect to $S$. 
Denote by $\mathbb{H}$ the Hecke algebra of $W$, which is a free 
$\mathbb{Z}[v,v^{-1}]$-module with basis $\{H_w:w\in W\}$ and multiplication
given by
\begin{displaymath}
H_xH_y=H_{xy} \text{ if } l(x)+l(y)=l(xy);
\text{ and } H_s^2=H_e+(v^{-1}-v)H_s, s\in S.
\end{displaymath}
Let $\{\underline{H}_w:w\in W\}$ and $\{\underline{\hat{H}}_w:w\in W\}$ 
denote the Kazhdan-Lusztig and the dual Kazhdan-Lusztig bases of
$\mathbb{H}$, respectively. 

Consider the Grothendieck group $[\mathtt{A}\text{-}\mathrm{gmod}]$
of $\mathtt{A}\text{-}\mathrm{gmod}$ and for $M\in 
\mathtt{A}\text{-}\mathrm{gmod}$ denote by $[M]$ the image of $M$
in $[\mathtt{A}\text{-}\mathrm{gmod}]$. Then the assignment
$[\Delta(w)\langle i\rangle]\mapsto v^{-i} H_w$ gives rise to an isomorphism
between $[\mathtt{A}\text{-}\mathrm{gmod}]$ and $\mathbb{H}$. In what follows
we will often identify $[\mathtt{A}\text{-}\mathrm{gmod}]$ and $\mathbb{H}$
via this isomorphism. For all $M\in \mathtt{A}\text{-}\mathrm{gmod}$
we have $[M\langle i\rangle]=v^{-i} [M]$. We also have
$[P(w)]=\underline{H}_w$ and $[L(w)]=\underline{\hat{H}}_w$ for all
$w\in W$. Furthermore, for any $w\in W$ and $M\in 
\mathtt{A}\text{-}\mathrm{gmod}$ we have 
$[\theta_w M]=[M]\underline{H}_w$. All the above extends to 
$D^b(\mathtt{A})$ in the obvious way.

Further, we denote by $\leq_L$, $\leq_R$ and $\leq_{LR}$ the left, the right
and the two-sided orders on $W$, respectively (to make things coherent
with \cite{MS4} our convention is that $e$ is the minimal element and
$w_0$ is the maximal element). The equivalence classes with
respect to these orders are called left-, right- and two-sided cells of $W$,
respectively. The corresponding equivalence relations will be denoted by
$\sim_{L}$, $\sim_{R}$ and  $\sim_{LR}$, respectively. 
Let $\mathbf{a}:W\to \mathbb{Z}$ be Lusztig's 
$\mathbf{a}$-function on $W$ (\cite{Lu,Lu2}). This function respects the
two-sided order, in particular, it is constant on
two-sided cells. On the (unique) distinguished (Duflo) involution $w$ from 
a given left cell we have $\mathbf{a}(w)=l(w)-2\delta(w)$, 
where $\delta(w)$ is the degree of the Kazhdan-Lusztig polynomial $P_{e,w}$.
Since $\mathbf{a}$ is constant on two-sided cells, we sometimes will
write $\mathbf{a}(X)$, where $X$ is cell (left, right, or two-sided),
meaning $\mathbf{a}(x)$, $x\in X$.

Any right cell $\mathbf{R}$ comes equipped with a natural dual 
Kazhdan-Lusztig basis. Multiplication with elements from the
Kazhdan-Lusztig basis respects the right preorder and produces elements,
which are linear combinations of elements from the dual 
Kazhdan-Lusztig basis of $\mathbf{R}$ or smaller right cells. Taking the
quotient defines on the linear span of elements from the dual
Kazhdan-Lusztig basis of $\mathbf{R}$ the structure of
an $\mathbb{H}$-module, called the (right) cell module.

\subsection{Subcategories of $\mathcal{O}$ associated with right 
cells}\label{s1.3}

For a fixed right cell $\mathbf{R}$ of $W$ let $\hat{\mathbf{R}}$ denote 
the set of  all elements $x\in W$ such that $x\leq_R w$ for some 
$w\in \mathbf{R}$. Let $\{e_w:w\in W\}$ be a complete set of 
primitive idempotents of $\mathtt{A}$ corresponding to our indexing
of simple modules. Following \cite{MS4} set 
$\displaystyle e(\hat{\mathbf{R}})=\sum_{w\not\in \hat{\mathbf{R}}}e_w$ and
consider the quotient
\begin{displaymath}
\mathtt{A}^{\hat{\mathbf{R}}}=
\mathtt{A}/\mathtt{A}e(\hat{\mathbf{R}})\mathtt{A}.
\end{displaymath}
Then $\mathtt{A}^{\hat{\mathbf{R}}}\text{-}\mathrm{gmod}$ is a 
subcategory of $\mathtt{A}\text{-}\mathrm{gmod}$ in the natural
way (this is the Serre subcategory of $\mathtt{A}\text{-}\mathrm{gmod}$
generated by the simple modules of the form $L(w)\langle k\rangle$,
$w\in \hat{\mathbf{R}}$, $k\in\mathbb{Z}$). The category
$\mathtt{A}^{\hat{\mathbf{R}}}\text{-}\mathrm{gmod}$ is stable
under all projective functors $\theta_w$, $w\in W$. 
See \cite{MS4} for details.
The structural modules in $\mathtt{A}^{\hat{\mathbf{R}}}\text{-}\mathrm{gmod}$
will be normally denoted similarly to the corresponding modules from
$\mathtt{A}\text{-}\mathrm{gmod}$ but with an extra index
$\hat{\mathbf{R}}$. 

We denote by $\mathrm{Z}^{\hat{\mathbf{R}}}:
\mathtt{A}\text{-}\mathrm{gmod}\to 
\mathtt{A}^{\hat{\mathbf{R}}}\text{-}\mathrm{gmod}$ the left adjoint
of the natural inclusion of 
$\mathtt{A}^{\hat{\mathbf{R}}}\text{-}\mathrm{gmod}$ into 
$\mathtt{A}\text{-}\mathrm{gmod}$. The functor $\mathrm{Z}^{\hat{\mathbf{R}}}$
is just the functor of taking the maximal possible quotient which belongs
to $\mathtt{A}^{\hat{\mathbf{R}}}\text{-}\mathrm{gmod}$. This functor
commutes with all projective functors \cite[Lemma~19]{MS4} and satisfies
\begin{displaymath}
\mathrm{Z}^{\hat{\mathbf{R}}}L(w)=
\begin{cases}
L^{\hat{\mathbf{R}}}(w), & w\in  \hat{\mathbf{R}};\\
0, & \text{otherwise};
\end{cases}
 \quad
\mathrm{Z}^{\hat{\mathbf{R}}}P(w)=
\begin{cases}
P^{\hat{\mathbf{R}}}(w), & w\in  \hat{\mathbf{R}};\\
0, & \text{otherwise}.
\end{cases}
\end{displaymath}

In the case when $\mathbf{R}$ contains an element of the form 
$xw_0$, where $x$ is the longest element in some parabolic subgroup of
$W$, the category $\mathtt{A}^{\hat{\mathbf{R}}}\text{-}\mathrm{gmod}$
is equivalent to the graded version of the parabolic category 
$\mathcal{O}$ in the sense of \cite{RC}. In this case 
$\mathrm{Z}^{\hat{\mathbf{R}}}$ is the corresponding Zuckerman functor
(\cite{MS25}).

\section{Images of simple modules under projective functors}\label{s2}

In this section we will study modules $M(x,y)=\theta_xL(y)$,
$x,y\in W$. On the level of the Hecke algebra we have
$[M(x,y)]=\underline{\hat{H}}_y\underline{H}_x\in\mathbb{H}$.
The latter elements of $\mathbb{H}$ play an important role in the 
combinatorics of $\mathbb{H}$, see \cite{Lu,Lu2,Mat,Ne}.

\subsection{Graded lengths of $M(x,y)$}\label{s2.1}

We start with the following result which describes nonzero homogeneous
components of the module $M(x,y)$.

\begin{proposition}\label{prop1}
For all $x,y\in W$ we have:
\begin{enumerate}[(a)]
\item\label{prop1-1} $M(x,y)^{\star}\cong M(x,y)$;
\item\label{prop1-2} $M(x,y)\neq 0$ if and only if $x\leq_R y^{-1}$.
\item\label{prop1-3} $\max(M(x,y))=-\min(M(x,y))=\mathbf{a}(y)$
whenever $x\sim_{R}y^{-1}$, and $\max(M(x,y))=-\min(M(x,y))<\mathbf{a}(y)$ 
whenever $x<_{R}y^{-1}$.
\end{enumerate}
\end{proposition}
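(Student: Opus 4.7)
Projective functors commute with $\star$ (Section~\ref{s1.1}) and $L(y)$ is self-dual as a simple module concentrated in degree zero, so
$$M(x,y)^{\star}=\theta_x(L(y)^{\star})\cong \theta_x L(y)=M(x,y).$$
Palindromicity of the graded character is an immediate consequence, giving $\max(M(x,y))=-\min(M(x,y))$. Thus in part (c) it suffices to compute $\max(M(x,y))$.

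\textbf{Reduction to the Hecke algebra via adjunction.} The plan is to handle parts (b) and (c) together using the adjunction between $\theta_{x^{-1}}$ and $\theta_x$: for $z\in W$ and $k\in\mathbb{Z}$,
$$\Hom(P(z)\langle k\rangle, M(x,y))\cong \Hom(\theta_{x^{-1}}P(z)\langle k\rangle, L(y)).$$
Because $L(y)$ is simple in degree zero, the right-hand side is nonzero iff $P(y)\langle k\rangle$ is a direct summand of $\theta_{x^{-1}}P(z)$, and its dimension is the corresponding multiplicity. Since $M(x,y)\neq 0$ iff some $\Hom(P(z)\langle k\rangle,M(x,y))$ is nonzero, and $\max(M(x,y))=-\min\{k:\Hom(P(z)\langle k\rangle,M(x,y))\neq 0\text{ for some }z\}$, both statements reduce to analysis of the graded summands of $\theta_{x^{-1}}P(z)$ as $z$ varies.

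\textbf{Finishing via Kazhdan--Lusztig and Lusztig.} In the Hecke algebra $[\theta_{x^{-1}}P(z)]=\underline{H}_z\underline{H}_{x^{-1}}=\sum_u h_{z,x^{-1},u}(v)\underline{H}_u$, and the multiplicity of $P(y)\langle k\rangle$ in $\theta_{x^{-1}}P(z)$ is the coefficient of $v^{-k}$ in $h_{z,x^{-1},y}(v)$. Hence $M(x,y)\neq 0$ iff $h_{z,x^{-1},y}(v)\neq 0$ for some $z$, and $\max(M(x,y))=\max_{z}\deg_{v^{-1}}h_{z,x^{-1},y}(v)$. For (b), the existence of $z$ with $h_{z,x^{-1},y}(v)\neq 0$ is governed by the preorder constraints on Kazhdan--Lusztig multiplication, which after the inversion identity $u\leq_L v\Leftrightarrow u^{-1}\leq_R v^{-1}$ is equivalent to $x\leq_R y^{-1}$. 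For (c), Lusztig's bound $\deg_{v^{-1}}h_{z,w,u}(v)\leq \mathbf{a}(u)$ gives $\max(M(x,y))\leq \mathbf{a}(y)$, while his cell characterization of when the leading coefficient $\gamma_{z,x^{-1},y^{-1}}$ is nonzero for some $z$ translates (again after the inversion identity) to the condition $x\sim_R y^{-1}$; strict inequality holds in the remaining case $x<_R y^{-1}$.

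\textbf{Main obstacle.} The principal nontrivial input is Lusztig's degree bound for the structure constants $h_{z,w,u}$, together with the cell-theoretic characterization of nonvanishing of the leading coefficients $\gamma$ from \cite{Lu,Lu2}. The rest of the argument is formal adjunction and the categorification dictionary from Section~\ref{s1.2}; the only thing that requires some care is the bookkeeping of the four preorder relations among $x,x^{-1},y,y^{-1}$ under the anti-involution $w\mapsto w^{-1}$.
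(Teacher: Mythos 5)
Your argument is correct and follows essentially the same route as the paper: part (a) via the commutation of $\star$ with projective functors, and parts (b)--(c) by reducing to standard Kazhdan--Lusztig combinatorics, namely the identity $\underline{\hat{H}}_y\underline{H}_x=\sum_z h_{z,x^{-1},y}\,\underline{\hat{H}}_z$ together with Lusztig's degree bound and the cell constraints on the leading coefficients $\gamma$. The only difference is cosmetic: you re-derive that identity by the adjunction $\Hom(P(z)\langle k\rangle,\theta_xL(y))\cong\Hom(\theta_{x^{-1}}P(z)\langle k\rangle,L(y))$, whereas the paper simply cites the explicit formula for $\underline{\hat{H}}_y\underline{H}_x$ (Mathas) and the fact that $\mathbf{a}$ respects the right order; your sign conventions for the grading shifts need the care you already flag, but this is harmless.
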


\begin{proof}
The module $L(y)$ is simple and hence selfdual (i.e. satisfies
$L(y)^{\star}\cong L(y)$). Now the claim \eqref{prop1-1} follows 
from the fact that $\star$ and $\theta_x$ commute. This implies
the equality $\max(M(x,y))=-\min(M(x,y))$ in \eqref{prop1-3}.

The rest of the statements is purely combinatorial and follows
from well-known properties of (dual) Kazhdan-Lusztig bases in the
Hecke algebra. To start with, the claim \eqref{prop1-2} follows 
from \cite[(1.4)]{Mat}.

That $\max(M(x,y))=\mathbf{a}(y)$ in the case $x\sim_{R}y^{-1}$ 
follows from the definition of $\mathbf{a}$ and the explicit formula 
for $\underline{\hat{H}}_y\underline{H}_x$, see
for example \cite[Lemma~2.1]{Mat}. Similarly,
that $\max(M(x,y))<\mathbf{a}(y)$ whenever $M(x,y)\neq 0$ 
and $x<_{R}y^{-1}$ follows from the definition of $\mathbf{a}$,
\cite[Lemma~2.1]{Mat} and the fact that $\mathbf{a}$ respects the
right order (\cite[Corollary~6.3]{Lu}). This  completes the proof.
\end{proof}

\subsection{The dominant projective module in 
$\mathtt{A}^{\hat{\mathbf{R}}}\text{-}\mathrm{gmod}$}\label{s2.2}

Our next goal is to show that modules $M(x,y)$ (for certain 
choices of $x$ and $y$) are projective-injective modules in the 
category $\mathtt{A}^{\hat{\mathbf{R}}}\text{-}\mathrm{gmod}$. 
To prove this we first have to describe the dominant projective 
module $P^{\hat{\mathbf{R}}}(e)$ in 
$\mathtt{A}^{\hat{\mathbf{R}}}\text{-}\mathrm{gmod}$.
In what follows we assume that $\mathbf{R}$ is a fixed right cell of $W$.

\begin{proposition}\label{prop2}
Let $x\in \mathbf{R}$. There is a unique (up to scalar) nonzero 
homomorphism from $\Delta(e)$ to $\theta_{x^{-1}}L(x)$ and the
module $P^{\hat{\mathbf{R}}}(e)$ coincides with the image of
this homomorphism.
\end{proposition}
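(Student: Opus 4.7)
The plan is to combine adjunction with the universal property of the Zuckerman-type quotient $\mathrm{Z}^{\hat{\mathbf{R}}}$, and then argue injectivity via a socle analysis.

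For existence and uniqueness, I use that $\theta_x$ is biadjoint to $\theta_{x^{-1}}$ together with the normalization $\theta_xP(e)\cong P(x)$ to compute
\begin{displaymath}
\Hom(\Delta(e),\theta_{x^{-1}}L(x))\cong\Hom(\theta_xP(e),L(x))=\Hom(P(x),L(x))\cong\mathbb{C},
\end{displaymath}
since $L(x)$ is the simple top of $P(x)$ in degree zero. Let $\varphi$ be a nonzero representative and $N=\mathrm{im}\,\varphi$. To factor $\varphi$ through $P^{\hat{\mathbf{R}}}(e)$: since $x\in\mathbf{R}\subseteq\hat{\mathbf{R}}$, the module $L(x)$ lies in $\mathtt{A}^{\hat{\mathbf{R}}}\text{-}\mathrm{gmod}$; this Serre subcategory is stable under all projective functors, so $\theta_{x^{-1}}L(x)$, and hence its subobject $N$, lies in it. The surjection $\Delta(e)\twoheadrightarrow N$ therefore factors through the maximal quotient $\mathrm{Z}^{\hat{\mathbf{R}}}P(e)=P^{\hat{\mathbf{R}}}(e)$, yielding a surjection $\pi:P^{\hat{\mathbf{R}}}(e)\twoheadrightarrow N$.

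The remaining, and main, task is to show $\pi$ is injective. Using that $\theta_x$ commutes with $\mathrm{Z}^{\hat{\mathbf{R}}}$ and that $\mathrm{Z}^{\hat{\mathbf{R}}}P(x)=P^{\hat{\mathbf{R}}}(x)$ (as $x\in\hat{\mathbf{R}}$), adjunction gives
\begin{displaymath}
\Hom(P^{\hat{\mathbf{R}}}(e),\theta_{x^{-1}}L(x))\cong\Hom(\theta_xP^{\hat{\mathbf{R}}}(e),L(x))=\Hom(P^{\hat{\mathbf{R}}}(x),L(x))\cong\mathbb{C},
\end{displaymath}
so the composite $P^{\hat{\mathbf{R}}}(e)\to N\hookrightarrow\theta_{x^{-1}}L(x)$ is, up to scalar, the unique nonzero such morphism. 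Since $P^{\hat{\mathbf{R}}}(e)$ has essential socle, it suffices to check that this composite is injective on $\mathrm{soc}(P^{\hat{\mathbf{R}}}(e))$. Here the expected mechanism is that the self-duality of $\theta_{x^{-1}}L(x)$ from Proposition~\ref{prop1}\eqref{prop1-1} together with the degree bound in Proposition~\ref{prop1}\eqref{prop1-3} concentrates the relevant simple constituents of $\mathrm{soc}(\theta_{x^{-1}}L(x))$ in degree $-\mathbf{a}(x)$; matching these with the Duflo-involution constituents of $\mathrm{soc}(P^{\hat{\mathbf{R}}}(e))$ via the Kazhdan--Lusztig expansion of $\underline{\hat{H}}_x\underline{H}_{x^{-1}}$ and the identity $[\theta_{x^{-1}}L(x)]=\underline{\hat{H}}_x\underline{H}_{x^{-1}}$ in $\mathbb{H}$ will force $\pi$ to be injective. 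The combinatorial identification of socle multiplicities through the $\mathbf{a}$-function is the main technical hurdle I anticipate.
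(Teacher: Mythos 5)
Your first two steps are fine and agree with the paper: the adjunction computation giving $\Hom(\Delta(e),\theta_{x^{-1}}L(x))\cong\mathbb{C}$, and the observation that the image $N$ lies in $\mathtt{A}^{\hat{\mathbf{R}}}\text{-}\mathrm{gmod}$, so the surjection $\Delta(e)\tto N$ factors through $P^{\hat{\mathbf{R}}}(e)=\mathrm{Z}^{\hat{\mathbf{R}}}\Delta(e)$. But the entire content of the proposition is the remaining step, that this surjection $\pi:P^{\hat{\mathbf{R}}}(e)\tto N$ is injective (a priori $N$ could be any proper quotient of $P^{\hat{\mathbf{R}}}(e)$ lying in the subcategory), and here your argument is only a hope, as you yourself flag. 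Two concrete problems: (i) the socle data you want to ``match'' --- that the socle of $P^{\hat{\mathbf{R}}}(e)$ is the single copy of $L(d)$ sitting in the extreme degree --- is exactly Corollary~\ref{cor3} (and Corollary~\ref{cor75}), which in the paper is \emph{deduced from} Proposition~\ref{prop2}; using it here is circular, and note also that with the paper's conventions the socle constituents sit in degree $+\mathbf{a}(x)$, not $-\mathbf{a}(x)$. (ii) The identity $[\theta_{x^{-1}}L(x)]=\underline{\hat{H}}_x\underline{H}_{x^{-1}}$ only records graded composition multiplicities; Grothendieck-group/Kazhdan--Lusztig data cannot detect socles, nor can it show that the image of a particular homomorphism contains the socle of $P^{\hat{\mathbf{R}}}(e)$, so no amount of bookkeeping with the $\mathbf{a}$-function along these lines will ``force'' injectivity. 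Your computation that $\Hom(P^{\hat{\mathbf{R}}}(e),\theta_{x^{-1}}L(x))\cong\mathbb{C}$ is correct but does not help: it is perfectly consistent with the unique map having a kernel.

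For comparison, the paper's injectivity argument uses genuinely different input. It first invokes K{\aa}hrstr{\"o}m's result ([Ka, Proposition~5.1]) that the image $D$ of $\Delta(e)$ in $\theta_{x^{-1}}L(x)$ does not depend on the choice of $x\in\mathbf{R}$, which allows one to take $x$ maximal in the Bruhat order; then $T(x)$ has a unique composition factor in $\mathtt{A}^{\hat{\mathbf{R}}}\text{-}\mathrm{gmod}$, namely $L(x)$, so $L(x)$ is the only homology of a complex $0\to M_{-1}\to T(x)\to M_1\to 0$ whose outer terms have no subquotients in the subcategory. Applying the exact functor $\theta_{x^{-1}}$ and using adjunction, one sees that the head of $\theta_{x^{-1}}M_{-1}$ and the socle of $\theta_{x^{-1}}M_1$ avoid the subcategory, while $\Delta(e)$ embeds into the summand $T(e)$ of the tilting module $\theta_{x^{-1}}T(x)$; tracking the unique occurrence of $L(e)$ then shows the induced map $P^{\hat{\mathbf{R}}}(e)\to\theta_{x^{-1}}L(x)$ is injective. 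Some argument of this kind (or another genuine substitute) is needed; as written, your proof has a gap at its main point.
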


\begin{proof}
By adjunction we have 
\begin{displaymath}
\begin{array}{rcl}
\mathrm{Hom}_{\mathtt{A}}(\Delta(e),\theta_{x^{-1}}L(x))
&=&\mathrm{Hom}_{\mathtt{A}}(\theta_{x}\Delta(e),L(x))\\
&=&\mathrm{Hom}_{\mathtt{A}}(P(x),L(x))\\&=&\mathbb{C},
\end{array}
\end{displaymath}
which proves the first part of the claim. Let $D$ denote the image 
of $\Delta(e)$ in $\theta_{x^{-1}}L(x)$. By \cite[Proposition~5.1]{Ka},
the module $D$ does not depend on the choice of $x$. Choose $x$ the
maximal possible (with respect to the Bruhat order).  Then the only
simple subquotient from 
$\mathtt{A}^{\hat{\mathbf{R}}}\text{-}\mathrm{gmod}$
in the module $T(x)$ is $L(x)$, so we have the complex
\begin{displaymath}
0\to M_{-1} \to T(x) \to M_1 \to 0, 
\end{displaymath}
which has only one nonzero homology, namely $L(x)$ in the zero position.
All simple subquotients of both $M_{-1}$ and $M_1$ do not belong to 
$\mathtt{A}^{\hat{\mathbf{R}}}\text{-}\mathrm{gmod}$. Applying the
exact functor $\theta_{x^{-1}}$ we get the complex
\begin{equation}\label{eq1}
0\to \theta_{x^{-1}} M_{-1} \to \theta_{x^{-1}} T(x) \to 
\theta_{x^{-1}} M_1 \to 0, 
\end{equation}
which has only one nonzero homology, namely $\theta_{x^{-1}} L(x)$ 
in the zero position. Note that this homology belongs to
$\mathtt{A}^{\hat{\mathbf{R}}}\text{-}\mathrm{gmod}$ as
$L(x)$ does and $\theta_{x^{-1}}$ preserves this category.

Since $T(x)$ has a unique occurrence of $L(x)$ (counting all shifts in
grading as well), by adjunction there is a unique (up to scalar)
nonzero morphism from
$\Delta(e)$ to $\theta_{x^{-1}} T(x)$, which is a tilting module. 
Since $\Delta(e)$ is the dominant Verma module, we get that 
$\theta_{x^{-1}} T(x)$ must contain $T(e)$ as a direct summand
(with multiplicity one, counting with all shifts) and 
the above homomorphism from $\Delta(e)$ to $\theta_{x^{-1}} T(x)$ 
is the natural injection from $\Delta(e)$ into this direct summand.

Let $y\in W$. Then, by adjunction, for every $i\in\mathbb{Z}$
we have
\begin{displaymath}
\mathrm{Hom}_{\mathtt{A}}(\theta_{x^{-1}} M_{-1},L(y)\langle i\rangle)= 
\mathrm{Hom}_{\mathtt{A}}(M_{-1},\theta_{x} L(y)\langle i\rangle).
\end{displaymath}
Since $\theta_{x}$ preserves 
$\mathtt{A}^{\hat{\mathbf{R}}}\text{-}\mathrm{gmod}$, the space on the
right hand side can be nonzero only if $y\not\in\mathbf{\hat{R}}$.  
It  thus follows
that every simple module occurring in the head of $M_{-1}$ does not
belong to $\mathtt{A}^{\hat{\mathbf{R}}}\text{-}\mathrm{gmod}$.
Similarly, all simple modules occurring in the socle of 
$\theta_{x^{-1}} M_1$ do not  belong to 
$\mathtt{A}^{\hat{\mathbf{R}}}\text{-}\mathrm{gmod}$.

From the above we know that the module $\theta_{x^{-1}} T(x)$ has a 
unique simple  subquotient isomorphic to $L(e)$ (counting all 
shifts of grading),  which, moreover, appears in the homology of
the sequence \eqref{eq1}. This means that the monomorphism from 
$\Delta(e)$ to $\theta_{x^{-1}} T(x)$ induces a homomorphism from
$P^{\hat{\mathbf{R}}}(e)=\mathrm{Z}^{\hat{\mathbf{R}}}\Delta(e)$
to the homology $\theta_{x^{-1}} L(x)$. From the two previous paragraphs
it follows that this homomorphism is injective. This completes the proof.
\end{proof}

\begin{corollary}\label{cor3}
Let $d$ be the Duflo involution in $\mathbf{R}$. The module
$P^{\hat{\mathbf{R}}}(e)$ has simple socle
$L(d)\langle-\mathbf{a}(d)\rangle$, and all other composition
subquotients of the form $L(x)\langle -i\rangle$, where
$x<_{LR}d$ and $0\leq i< \mathbf{a}(d)$.
\end{corollary}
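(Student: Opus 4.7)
The plan is to take $x=d$, the Duflo involution of $\mathbf{R}$ (so $d^{-1}=d$), in Proposition~\ref{prop2}, which realizes $P^{\hat{\mathbf{R}}}(e)$ as the image of the unique (up to scalar) nonzero map $\varphi\colon\Delta(e)\to M(d,d)=\theta_{d}L(d)$. The corollary is then obtained by transferring structural information about $M(d,d)$ --- encoded in the Hecke-algebra product $[M(d,d)]=\underline{\hat{H}}_{d}\underline{H}_{d}$ --- across this embedding.

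First I would use a grading restriction: since $\Delta(e)$ is generated in degree $0$ and $M(d,d)$ is supported in degrees $[-\mathbf{a}(d),\mathbf{a}(d)]$ by Proposition~\ref{prop1}(c), the submodule $P^{\hat{\mathbf{R}}}(e)\subseteq M(d,d)$ lives in degrees $[0,\mathbf{a}(d)]$ with head $L(e)$ in degree~$0$, so every composition factor is of the form $L(y)\langle -i\rangle$ with $y\in\hat{\mathbf{R}}$ and $0\le i\le\mathbf{a}(d)$. Next I expand $\underline{\hat{H}}_{d}\underline{H}_{d}$ in the dual Kazhdan--Lusztig basis: the leading-coefficient characterization of the Duflo involution (via Lusztig's asymptotic structure constants) together with the strict monotonicity $\mathbf{a}(y)<\mathbf{a}(d)$ for $y<_{LR}d$ yields
\[
\underline{\hat{H}}_{d}\underline{H}_{d}=v^{\mathbf{a}(d)}\underline{\hat{H}}_{d}+\sum_{y\ne d}\beta_{y}(v)\,\underline{\hat{H}}_{y},\qquad \deg_{v}\beta_{y}<\mathbf{a}(d).
\]
Using $[L(y)\langle -i\rangle]=v^{i}\underline{\hat{H}}_{y}$, this identifies $L(d)\langle-\mathbf{a}(d)\rangle$ as the unique composition factor of $M(d,d)$ in degree $\mathbf{a}(d)$, occurring with multiplicity one.

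The socle claim then follows once I verify that $L(d)\langle-\mathbf{a}(d)\rangle\subseteq P^{\hat{\mathbf{R}}}(e)$, which I would extract from the proof of Proposition~\ref{prop2} by following the embedding $\Delta(e)\hookrightarrow T(e)\subseteq\theta_{d}T(d)$ through the exact sequence \eqref{eq1} and observing that its composition with the passage to the homology $\theta_{d}L(d)$ must remain nonzero at the top degree (otherwise the injectivity of $P^{\hat{\mathbf{R}}}(e)\hookrightarrow\theta_{d}L(x)$ proved there would fail). Uniqueness at that degree then promotes $L(d)\langle-\mathbf{a}(d)\rangle$ to the entire socle, and every remaining composition factor satisfies $0\le i<\mathbf{a}(d)$ automatically. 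The refinement from $y\ne d$ to $x<_{LR}d$ --- which further excludes the possibilities $L(d)\langle-i\rangle$ with $i<\mathbf{a}(d)$ and $L(y)\langle-i\rangle$ with $y\in\mathbf{R}\setminus\{d\}$, all $\sim_{LR}d$ --- is the main obstacle I expect. Resolving it goes beyond the leading-coefficient Duflo property: one presumably has to invoke either the cell-module structure on $[\mathtt{A}^{\hat{\mathbf{R}}}\text{-}\mathrm{gmod}]$ or a direct $\mathrm{Hom}$-vanishing argument showing that $\varphi$ cannot introduce composition factors from the two-sided cell of $d$ other than $d$ itself at the extremal shift.
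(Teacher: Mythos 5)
Your setup (take $x=d$ in Proposition~\ref{prop2}, use the degree-zero normalization of the map $\Delta(e)\to M(d,d)$ and Proposition~\ref{prop1} to confine the image to degrees $0\le i\le\mathbf{a}(d)$ with factors $L(y)\langle-i\rangle$, $y\in\hat{\mathbf{R}}$) is fine, and your Hecke-algebra identity for $\underline{\hat{H}}_d\underline{H}_d$ is true; note, however, that it does not follow from Proposition~\ref{prop1}, which only bounds the graded support -- identifying the degree-$\mathbf{a}(d)$ layer of $M(d,d)$ as exactly one copy of $L(d)$ requires Lusztig's properties of the asymptotic structure constants $\gamma_{x,y,z}$ (P-type properties for Weyl groups), which you only gesture at. The genuine gaps come after that. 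First, your argument that the image $P^{\hat{\mathbf{R}}}(e)$ actually contains the top-degree copy $L(d)\langle-\mathbf{a}(d)\rangle$ is circular: Proposition~\ref{prop2} \emph{defines} $P^{\hat{\mathbf{R}}}(e)$ as the image of $\Delta(e)$ in $\theta_dL(d)$, so the injectivity of $P^{\hat{\mathbf{R}}}(e)\hookrightarrow\theta_dL(d)$ is tautological and would not ``fail'' if the image happened to live only in degrees $<\mathbf{a}(d)$; nothing in the proof of Proposition~\ref{prop2} forces the image to reach degree $\mathbf{a}(d)$. Second, ``uniqueness at that degree promotes $L(d)\langle-\mathbf{a}(d)\rangle$ to the entire socle'' is a non sequitur: a priori the socle could have constituents in lower degrees, and excluding those -- together with excluding factors $L(x)$, $x\sim_{LR}d$, $x\neq d$, and extra copies of $L(d)$ in degrees $<\mathbf{a}(d)$ -- is precisely the substantive content of the corollary (the statement $x<_{LR}d$, not merely $x\le_{LR}d$). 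You yourself flag this as an unresolved ``main obstacle,'' so what your proposal actually establishes are only the easy grading bounds, not the simple socle nor the strict two-sided-cell condition. (You also cannot appeal to Theorem~\ref{thm6} or Corollary~\ref{cor75}, since those are deduced later from this corollary.)

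For comparison, the paper does not try to extract any of this from the combinatorics of $M(d,d)$: it deduces the corollary directly from Proposition~\ref{prop2} together with K{\aa}hrstr{\"o}m's description of this image, \cite[Proposition~5.1]{Ka} -- the same external result already invoked in the proof of Proposition~\ref{prop2} to show independence of the choice of $x\in\mathbf{R}$. That result supplies exactly the missing structural facts (simple socle $L(d)$, all other composition factors strictly below $d$ in the two-sided order, coming from the relation of this module to a primitive quotient of $U(\mathfrak{g})$, cf.\ Remark~\ref{rem5}); the grading statements then follow from the degree-zero normalization and the bound of Proposition~\ref{prop1}. If you want a self-contained proof along your lines, you would in effect have to reprove a statement of the strength of \cite[Proposition~5.1]{Ka}, which is considerably more than the leading-term computation in the Hecke algebra.
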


\begin{proof}
This follows from  Proposition~\ref{prop2} and \cite[Proposition~5.1]{Ka}.
\end{proof}

\begin{remark}\label{rem4}
{\rm
Proposition~\ref{prop2} proves \cite[Conjecture~2]{KM}.
}
\end{remark}

\begin{remark}\label{rem5}
{\rm
Using \cite[Proposition~5.1]{Ka} one can relate the module
$P^{\hat{\mathbf{R}}}(e)$ to a primitive quotient of 
the universal enveloping algebra $U(\mathfrak{g})$.
}
\end{remark}

\subsection{Projective-injective modules in 
$\mathtt{A}^{\hat{\mathbf{R}}}\text{-}\mathrm{gmod}$}\label{s2.3}

Using the results from the previous subsection we obtain the
following:

\begin{theorem}\label{thm6}
Let $d\in \mathbf{R}$ be the Duflo involution. Then the modules
$M(x,d)$, $x\in\mathbf{R}$, are exactly the indecomposable
projective-injective modules in 
$\mathtt{A}^{\hat{\mathbf{R}}}\text{-}\mathrm{gmod}$
(up to shift).
\end{theorem}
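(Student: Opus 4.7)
The plan is to identify $M(x,d) = \theta_x L(d)$ for $x \in \mathbf{R}$ with a shifted indecomposable projective module in $\mathtt{A}^{\hat{\mathbf{R}}}\text{-}\mathrm{gmod}$; once this identification is made, projective-injectivity and indecomposability become immediate, and the exhaustion claim reduces to showing that only projectives indexed by $\mathbf{R}$ can be injective.

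To set things up, $M(x,d)$ lies in $\mathtt{A}^{\hat{\mathbf{R}}}\text{-}\mathrm{gmod}$ because $\theta_x$ preserves this subcategory and $d \in \mathbf{R} \subseteq \hat{\mathbf{R}}$; it is nonzero by Proposition~\ref{prop1}\eqref{prop1-2} since $x \sim_R d = d^{-1}$. For the identification, the starting point is Proposition~\ref{prop2} with the Duflo involution $d$ in the role of $x$, giving an embedding $P^{\hat{\mathbf{R}}}(e) \hookrightarrow \theta_d L(d)$. Since $\theta_x$ commutes with $\mathrm{Z}^{\hat{\mathbf{R}}}$ and satisfies $\theta_x P(e) = P(x)$, one has $\theta_x P^{\hat{\mathbf{R}}}(e) = P^{\hat{\mathbf{R}}}(x)$ for $x \in \hat{\mathbf{R}}$. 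Applying $\theta_x$ to the above embedding therefore produces $P^{\hat{\mathbf{R}}}(x) \hookrightarrow \theta_x \theta_d L(d)$. The right-hand side decomposes according to the Kazhdan--Lusztig product $\underline{H}_x \underline{H}_d = \sum_z h_{x,d}^z(v) \underline{H}_z$, so it is a direct sum of shifted copies of modules of the form $M(z,d)$. Using Proposition~\ref{prop1}\eqref{prop1-3} to control the grading range of each summand, together with the indecomposability of $P^{\hat{\mathbf{R}}}(x)$, I would argue that the embedding must land in the summand with $z=x$ and shift $\langle -\mathbf{a}(d)\rangle$, giving $P^{\hat{\mathbf{R}}}(x)\langle \mathbf{a}(d)\rangle \hookrightarrow M(x,d)$. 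Matching graded Grothendieck classes --- $[M(x,d)] = \underline{\hat{H}}_d \underline{H}_x$ restricts, in the cell module of $\mathbf{R}$, to $v^{-\mathbf{a}(d)}[P^{\hat{\mathbf{R}}}(x)]$, since by Corollary~\ref{cor3} all contributions of $P^{\hat{\mathbf{R}}}(e)$ outside $\mathbf{R}$ vanish in the cell module --- upgrades the inclusion to an isomorphism $M(x,d) \cong P^{\hat{\mathbf{R}}}(x)\langle \mathbf{a}(d)\rangle$.

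Once $M(x,d)$ is identified with a shifted indecomposable projective, injectivity and indecomposability follow immediately. Proposition~\ref{prop1}\eqref{prop1-1} gives $M(x,d)^\star \cong M(x,d)$, and since the duality $\star$ interchanges projective and injective modules in $\mathtt{A}^{\hat{\mathbf{R}}}\text{-}\mathrm{gmod}$, a self-dual projective is automatically injective. For the converse statement, that every indecomposable projective-injective in the subcategory is some $M(x,d)$ up to shift, I would show that $P^{\hat{\mathbf{R}}}(y)$ is not injective when $y \in \hat{\mathbf{R}} \setminus \mathbf{R}$: using $P^{\hat{\mathbf{R}}}(y) = \theta_y P^{\hat{\mathbf{R}}}(e)$ and invoking Proposition~\ref{prop1}\eqref{prop1-3} together with Corollary~\ref{cor3}, one sees that the socle of $P^{\hat{\mathbf{R}}}(y)$ for such $y$ either contains several non-isomorphic simples or has a grading asymmetry incompatible with the simple socle of an indecomposable injective in this Koszul setup.

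The main obstacle is the identification of the correct summand in the decomposition of $\theta_x \theta_d L(d)$: one must use the positivity and precise shape of the Kazhdan--Lusztig structure constants $h_{x,d}^z$ along the two-sided cell of $d$, in conjunction with Proposition~\ref{prop1}\eqref{prop1-3}, to show both that the embedding $P^{\hat{\mathbf{R}}}(x) \hookrightarrow \theta_x\theta_d L(d)$ cannot split across several summands and that the relevant summand appears with multiplicity one in the right shift. Everything else --- injectivity via self-duality, indecomposability from the projective identification, and the counting argument for exhaustion --- is a clean consequence of the results already established in the preliminaries and in Proposition~\ref{prop2}.
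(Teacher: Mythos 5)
There is a genuine gap at the heart of your argument, namely in the identification $M(x,d)\cong P^{\hat{\mathbf{R}}}(x)\langle \mathbf{a}(d)\rangle$. You route this through the embedding $P^{\hat{\mathbf{R}}}(e)\hookrightarrow \theta_d L(d)$, apply $\theta_x$, and then try to locate the image inside the decomposition of $\theta_x\theta_d L(d)$; you yourself flag the summand-identification as ``the main obstacle'' and never actually carry it out (a priori the indecomposable $P^{\hat{\mathbf{R}}}(x)$ could embed diagonally, with components in several summands $M(z,d)\langle j\rangle$). The proposed repair by matching classes ``in the cell module'' cannot close the gap: equality of classes in that quotient only sees composition factors $L(z)$ with $z\in\mathbf{R}$, so even granted an injection $P^{\hat{\mathbf{R}}}(x)\langle\mathbf{a}(d)\rangle\hookrightarrow M(x,d)$, the cokernel could consist entirely of subquotients $L(y)$ with $y$ in strictly smaller right cells of $\hat{\mathbf{R}}$ and would be invisible to the comparison. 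Moreover the identity you assert, that $\underline{\hat{H}}_d\underline{H}_x$ projects to $v^{-\mathbf{a}(d)}[P^{\hat{\mathbf{R}}}(x)]$, is itself nontrivial Kazhdan--Lusztig combinatorics (it encodes special properties of the Duflo involution via the coefficients $h_{x,d}^z$), and note that $[P^{\hat{\mathbf{R}}}(x)]$ is not simply the projection of $\underline{H}_x$, since $\mathrm{Z}^{\hat{\mathbf{R}}}$ is only right exact and can kill composition factors indexed by $\hat{\mathbf{R}}$ as well.

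The paper avoids all of this with a one-step argument you should compare with: apply the exact functor $\theta_x$, $x\in\mathbf{R}$, directly to the short exact sequence $0\to L(d)\langle-\mathbf{a}(d)\rangle\to P^{\hat{\mathbf{R}}}(e)\to\mathrm{Coker}\to 0$ furnished by Corollary~\ref{cor3}. All composition factors of $\mathrm{Coker}$ are of the form $L(y)$ with $y<_{LR}d$, so Proposition~\ref{prop1}\eqref{prop1-2} forces $\theta_x\mathrm{Coker}=0$ (if $x\leq_R y^{-1}$ then $y\geq_{LR}d$, a contradiction), and exactness then gives the isomorphism $M(x,d)\langle-\mathbf{a}(d)\rangle\cong\theta_x P^{\hat{\mathbf{R}}}(e)=P^{\hat{\mathbf{R}}}(x)$ outright, surjectivity included, with no decomposition of $\theta_x\theta_d$ and no Grothendieck-group counting. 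Your deduction of injectivity from self-duality is fine and matches the paper. For the exhaustion claim, however, your dichotomy (``several non-isomorphic simples in the socle or a grading asymmetry incompatible with injectivity'') is not an argument -- indecomposable injectives carry no grading-symmetry constraint. The actual point is that, by an adjunction argument as in Proposition~\ref{prop2}, every simple submodule of $P^{\hat{\mathbf{R}}}(y)$ for $y\in\hat{\mathbf{R}}\setminus\mathbf{R}$ is of the form $L(z)$ with $z\in\mathbf{R}$ (up to shift); since by the first part the indecomposable injectives with such socles are the modules $M(z',d)$, which have simple top indexed by $\mathbf{R}$, injectivity of $P^{\hat{\mathbf{R}}}(y)$ would contradict its simple top $L(y)$ with $y\notin\mathbf{R}$.
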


\begin{proof}
Let $x\in\mathbf{R}$. Applying $\theta_x$ to the short exact sequence
\begin{displaymath}
0\to L(d)\langle-\mathbf{a}(d)\rangle\to 
 P^{\hat{\mathbf{R}}}(e)\to\mathrm{Coker}\to 0,
\end{displaymath}
given by Corollary~\ref{cor3}, we obtain the short exact sequence
\begin{displaymath}
0\to M(x,d)\langle-\mathbf{a}(d)\rangle\to 
P^{\hat{\mathbf{R}}}(x)\to\theta_x\mathrm{Coker}\to 0.
\end{displaymath}
From Corollary~\ref{cor3} and Proposition~\ref{prop1}\eqref{prop1-2}
we have $\theta_x\mathrm{Coker}=0$ and hence 
$M(x,d)\langle-\mathbf{a}(d)\rangle\cong 
P^{\hat{\mathbf{R}}}(x)$. This shows that the module $M(x,d)$ is 
projective in $\mathtt{A}^{\hat{\mathbf{R}}}\text{-}\mathrm{gmod}$.
From Proposition~\ref{prop1}\eqref{prop1-1} we have that 
$M(x,d)$ is self-dual, hence it is injective in 
$\mathtt{A}^{\hat{\mathbf{R}}}\text{-}\mathrm{gmod}$ as well.

On the other hand, for $x\in \mathbf{\hat{R}}\setminus\mathbf{R}$
we have that $P^{\hat{\mathbf{R}}}(x)=\theta_xP^{\hat{\mathbf{R}}}(e)$
has simple top $L(x)$. At the same time, as $P^{\hat{\mathbf{R}}}(e)$
has simple socle $L(d)$ (up to shift), using adjunction and arguments 
similar to those used in the proof of Proposition~\ref{prop2}, one 
shows that every simple submodule of $P^{\hat{\mathbf{R}}}(x)$ must have 
the  form $L(y)$, $y\in \mathbf{R}$ (up to shift). Therefore the injective
cover of $P^{\hat{\mathbf{R}}}(x)$ does not coincide with 
$P^{\hat{\mathbf{R}}}(x)$ by the previous paragraph.
Hence the module $P^{\hat{\mathbf{R}}}(x)$ is not 
injective. This completes the proof.
\end{proof}

The following result generalizes some results of \cite{Ir0}:

\begin{corollary}\label{cor7501}
The Loewy length of every projective-injective module in 
$\mathtt{A}^{\hat{\mathbf{R}}}\text{-}\mathrm{gmod}$ equals
$2\mathbf{a}(\mathbf{R})+1$.
\end{corollary}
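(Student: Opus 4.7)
The plan is to apply Theorem~\ref{thm6} to reduce the problem to computing the Loewy length of a single module of the form $M(x,d)$, and then to combine the graded-support information from Proposition~\ref{prop1}\eqref{prop1-3} with the Koszul structure of $\mathtt{A}$. First, Theorem~\ref{thm6} identifies every indecomposable projective-injective in $\mathtt{A}^{\hat{\mathbf{R}}}\text{-}\mathrm{gmod}$ as $M(x,d)$ up to grading shift, where $d$ is the Duflo involution in $\mathbf{R}$ and $x\in\mathbf{R}$. Since grading shifts preserve the Loewy length, it suffices to compute the Loewy length of an individual $M(x,d)$.

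Next I would read off the graded support. Since $x\in\mathbf{R}$ and $d$ is an involution, we have $x\sim_R d^{-1}$, so Proposition~\ref{prop1}\eqref{prop1-3} gives $\max(M(x,d))=-\min(M(x,d))=\mathbf{a}(d)=\mathbf{a}(\mathbf{R})$. Thus $M(x,d)$ is concentrated in graded degrees between $-\mathbf{a}(\mathbf{R})$ and $\mathbf{a}(\mathbf{R})$, with both extremes realized. Moreover, the isomorphism $M(x,d)\langle-\mathbf{a}(d)\rangle\cong P^{\hat{\mathbf{R}}}(x)$ from the proof of Theorem~\ref{thm6} shows that the simple top of $M(x,d)$ sits precisely in the minimal degree $-\mathbf{a}(\mathbf{R})$; in particular, the cyclic module $M(x,d)$ is generated by its lowest-degree component.

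The final step exploits Koszulity: $\mathtt{A}$ is positively graded with semisimple degree-zero part and is generated in degree~$1$, so $\mathrm{rad}^k(\mathtt{A})=\bigoplus_{i\geq k}\mathtt{A}_i$ and hence $\mathrm{rad}^k M\subseteq \bigoplus_{i\geq \min(M)+k}M_i$ for any graded module $M$. Taking $M=M(x,d)$ and $k=2\mathbf{a}(\mathbf{R})+1$ forces $\mathrm{rad}^{2\mathbf{a}(\mathbf{R})+1}M=0$, which gives the upper bound on the Loewy length. Conversely, since $M(x,d)$ is generated by $M(x,d)_{-\mathbf{a}(\mathbf{R})}$, any element in degree $-\mathbf{a}(\mathbf{R})+k$ lies in $\mathtt{A}_k\cdot M(x,d)_{-\mathbf{a}(\mathbf{R})}\subseteq\mathrm{rad}^k M(x,d)$; taking $k=2\mathbf{a}(\mathbf{R})$ and noting that $M(x,d)_{\mathbf{a}(\mathbf{R})}\neq 0$ by Proposition~\ref{prop1}\eqref{prop1-3} yields $\mathrm{rad}^{2\mathbf{a}(\mathbf{R})}M(x,d)\neq 0$, providing the matching lower bound.

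Assembling the two bounds gives the claimed Loewy length $2\mathbf{a}(\mathbf{R})+1$. There is no significant obstacle here: once Theorem~\ref{thm6} and Proposition~\ref{prop1} are in hand the statement is essentially formal, and the only point requiring attention is tracking the grading shift $\langle-\mathbf{a}(d)\rangle$ to make sure that the top of $M(x,d)$ really lives in the minimal graded degree, so that the Koszul-style comparison between the radical filtration and the grading produces matching bounds.
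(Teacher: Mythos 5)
Your proof is correct and follows essentially the same route as the paper: Theorem~\ref{thm6} together with Proposition~\ref{prop1} give that the graded length of each indecomposable projective-injective is $2\mathbf{a}(\mathbf{R})+1$, and the positivity of the Koszul grading with generation in degrees zero and one converts the graded length into the Loewy length. The only difference is cosmetic: where the paper invokes \cite[Proposition~2.4.1]{BGS} (via the simple head and simple socle) to identify the grading filtration with a Loewy filtration, you verify the two radical-filtration bounds directly, using that $M(x,d)$ is generated in its lowest degree because $M(x,d)\langle-\mathbf{a}(d)\rangle\cong P^{\hat{\mathbf{R}}}(x)$.
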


\begin{proof}
Let $X$ be an indecomposable projective-injective module
in the category $\mathtt{A}^{\hat{\mathbf{R}}}\text{-}\mathrm{gmod}$. 
From Theorem~\ref{thm6} and Proposition~\ref{prop1} we obtain
that $2\mathbf{a}(\mathbf{R})+1$ is the graded length of this module
(the number of nonzero homogeneous components). As the algebra
$\mathtt{A}$ is Koszul, it is positively graded and generated in 
degrees zero and one. Hence the quotient algebra
$\mathtt{A}^{\hat{\mathbf{R}}}$ is positively graded and generated in 
degrees zero and one as well. Since $X$ has both simple socle and simple
head (by Theorem~\ref{thm6}), from \cite[Proposition~2.4.1]{BGS} we thus
obtain that the graded filtration of $X$ is a Loewy filtration.
The claim follows.
\end{proof}

\begin{corollary}\label{cor7}
The injective envelope of $P^{\hat{\mathbf{R}}}(e)$ is
$P^{\hat{\mathbf{R}}}(d)\langle \mathbf{a}(d)\rangle$.
\end{corollary}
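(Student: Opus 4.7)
The plan is to combine Theorem~\ref{thm6} with Proposition~\ref{prop2} and Corollary~\ref{cor3}, the point being that $P^{\hat{\mathbf{R}}}(e)$ sits naturally as an essential submodule of the projective-injective module $M(d,d)$, and that $M(d,d)$ is, up to shift, exactly $P^{\hat{\mathbf{R}}}(d)$.

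\textbf{Step 1: Identify $M(d,d)$.} From the proof of Theorem~\ref{thm6} specialized to $x=d$ we already have the isomorphism $M(d,d)\langle-\mathbf{a}(d)\rangle\cong P^{\hat{\mathbf{R}}}(d)$, equivalently $M(d,d)\cong P^{\hat{\mathbf{R}}}(d)\langle\mathbf{a}(d)\rangle$, and by the same theorem this module is indecomposable and injective in $\mathtt{A}^{\hat{\mathbf{R}}}\text{-}\mathrm{gmod}$.

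\textbf{Step 2: Identify the socle of $M(d,d)$.} The graded module $P^{\hat{\mathbf{R}}}(d)$ has simple top $L(d)$ concentrated in degree zero, so $M(d,d)$ has simple top $L(d)\langle\mathbf{a}(d)\rangle$. Combined with Proposition~\ref{prop1}\eqref{prop1-1} (self-duality $M(d,d)^{\star}\cong M(d,d)$) and the fact that $\star$ reverses the grading while fixing isoclasses of simples in degree zero, we conclude that the socle of $M(d,d)$ is $L(d)\langle-\mathbf{a}(d)\rangle$.

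\textbf{Step 3: Embed $P^{\hat{\mathbf{R}}}(e)$ into $M(d,d)$.} Since $d$ is an involution, Proposition~\ref{prop2} applied to $x=d$ presents $P^{\hat{\mathbf{R}}}(e)$ as the image of the (unique up to scalar) degree-zero map $\Delta(e)\to\theta_dL(d)=M(d,d)$. This realizes $P^{\hat{\mathbf{R}}}(e)$ as a submodule of $M(d,d)$. By Corollary~\ref{cor3}, $P^{\hat{\mathbf{R}}}(e)$ has simple socle $L(d)\langle-\mathbf{a}(d)\rangle$, which by Step~2 coincides with the socle of $M(d,d)$.

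\textbf{Step 4: Conclude.} Because $M(d,d)$ is indecomposable injective in $\mathtt{A}^{\hat{\mathbf{R}}}\text{-}\mathrm{gmod}$ and its simple socle agrees with the simple socle of the submodule $P^{\hat{\mathbf{R}}}(e)$, the inclusion from Step~3 is essential and $M(d,d)$ is the injective envelope of $P^{\hat{\mathbf{R}}}(e)$. Rewriting via Step~1 gives the desired identification with $P^{\hat{\mathbf{R}}}(d)\langle\mathbf{a}(d)\rangle$. The only genuinely delicate point is the grading bookkeeping in Step~2—tracking that the $\mathbf{a}(d)$-shift on the projective side matches the $-\mathbf{a}(d)$-shift appearing in the socle of $P^{\hat{\mathbf{R}}}(e)$; once that is in place the rest is formal.
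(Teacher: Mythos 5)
Your proof is correct and follows essentially the same route as the paper: the paper's one-line proof combines Theorem~\ref{thm6}, Corollary~\ref{cor3} and Corollary~\ref{cor7501}, i.e.\ it identifies the injective hull of the simple socle $L(d)\langle-\mathbf{a}(d)\rangle$ of $P^{\hat{\mathbf{R}}}(e)$ with the projective-injective module $M(d,d)\cong P^{\hat{\mathbf{R}}}(d)\langle\mathbf{a}(d)\rangle$, exactly as you do. The only cosmetic difference is that you realize the inclusion explicitly via Proposition~\ref{prop2} (with $x=d$) and handle the grading bookkeeping through self-duality (Proposition~\ref{prop1}\eqref{prop1-1}) rather than through the Loewy-length statement of Corollary~\ref{cor7501}.
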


\begin{proof}
This follows from Theorem~\ref{thm6} and Corollaries~\ref{cor3}
and \ref{cor7501}. 
\end{proof}

\begin{corollary}\label{cor75}
Let $d\in \mathbf{R}$ be the Duflo involution. Then all modules
$M(x,d)$, $x\in\mathbf{R}$, have their simple socles and simple heads
in degrees $\mathbf{a}(x)$ and $-\mathbf{a}(x)$, respectively.
\end{corollary}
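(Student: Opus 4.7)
The plan is to deduce this directly from Theorem \ref{thm6} together with the self-duality from Proposition \ref{prop1}. The key observation is that Theorem \ref{thm6} gives the isomorphism $M(x,d)\langle-\mathbf{a}(d)\rangle \cong P^{\hat{\mathbf{R}}}(x)$, so $M(x,d)$ is, up to a grading shift, just an indecomposable projective-injective module in $\mathtt{A}^{\hat{\mathbf{R}}}\text{-}\mathrm{gmod}$.

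First I would note that since $x$ and $d$ both lie in the right cell $\mathbf{R}$, they lie in the same two-sided cell, and hence $\mathbf{a}(x)=\mathbf{a}(d)$ because $\mathbf{a}$ is constant on two-sided cells. Reading the isomorphism as $M(x,d)\cong P^{\hat{\mathbf{R}}}(x)\langle\mathbf{a}(d)\rangle$, and noting that $P^{\hat{\mathbf{R}}}(x)$ is by definition the graded projective cover of the simple module $L^{\hat{\mathbf{R}}}(x)$ concentrated in degree zero, I conclude that the top of $M(x,d)$ is simple and concentrated in degree $-\mathbf{a}(d)=-\mathbf{a}(x)$. This handles the head.

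For the socle I would invoke Proposition \ref{prop1}\eqref{prop1-1}, which gives $M(x,d)^{\star}\cong M(x,d)$. Since the duality $\star$ exchanges the top and socle of a graded module while negating degrees, the socle of $M(x,d)$ must be the $\star$-image of its top; it is therefore simple and concentrated in degree $\mathbf{a}(x)$, as claimed. I do not foresee any real obstacle here — the statement is essentially a direct readout of Theorem \ref{thm6}, with the socle statement following from the already-established self-duality; the only mild care required is to check the sign convention for the grading shift $\langle\,\cdot\,\rangle$ and the compatibility of $\star$ with the grading.
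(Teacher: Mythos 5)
Your argument is correct and follows essentially the paper's own proof: the paper likewise deduces the corollary from Theorem~\ref{thm6} (via the graded isomorphism $M(x,d)\langle-\mathbf{a}(d)\rangle\cong P^{\hat{\mathbf{R}}}(x)$ established in its proof), the positivity of the grading on $\mathtt{A}$ (which places the simple head of the projective cover in degree zero, hence the head of $M(x,d)$ in degree $-\mathbf{a}(d)=-\mathbf{a}(x)$), and the self-duality from Proposition~\ref{prop1}\eqref{prop1-1} to obtain the socle statement. The only cosmetic remark is that the exact shift comes from the proof of Theorem~\ref{thm6} rather than its statement (which is only "up to shift"), but that is precisely how the paper argues as well.
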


\begin{proof}
This follows from the positivity of the grading on
$\mathtt{A}$, Proposition~\ref{prop1} and Theorem~\ref{thm6}.
\end{proof}

\begin{remark}\label{rem755}
{\rm  
Projective-injective modules play important role in the structure
and properties of the category $\mathcal{O}$ and related categories,
see \cite{Ir0,MS,MS4} and references therein. 
}
\end{remark}

\subsection{Application to Kostant's problem}\label{s2.4}

Results from the previous subsections can be applied to one classical
problem in Lie theory, called Kostant's problem (\cite{Jo}). 
If $M,N$ are two $\mathfrak{g}$-modules, then 
$\mathrm{Hom}_{\mathbb{C}}(M,N)$ is a $\mathfrak{g}$-bimodule in the
natural way. Denote by $\mathcal{L}(M,N)$ the subbimodule of 
$\mathrm{Hom}_{\mathbb{C}}(M,N)$, which consists of all elements,
the adjoint action of $\mathfrak{g}$ on which is locally finite.
Then for any $\mathfrak{g}$-module $M$ the universal enveloping
algebra $U(\mathfrak{g})$ maps naturally to $\mathcal{L}(M,M)$ 
inducing an injection
\begin{equation}\label{eq2}
U(\mathfrak{g})/\mathrm{Ann}_{U(\mathfrak{g})}(M)\hookrightarrow
\mathcal{L}(M,M).
\end{equation}
Kostant's problem for $M$ is to determine whether the latter map
is surjective. The problem is very hard and the answer is not even
known for the modules $L(w)$, $w\in W$, in the general case, although
many special cases are settled (see \cite{Jo,Ma3,MS3,MS4,Ka,KM} are references
therein). Taking into account the results of the previous subsections,
the main result of \cite{KM} can be formulated as follows:

\begin{theorem}[\cite{KM}]\label{thm8}
Let $d\in \mathbf{R}$ be the Duflo involution. Then 
Kos\-tant's problem for $L(d)$ has a positive answer (i.e. the map from
\eqref{eq2} is surjective) if and only if the only simple modules
occurring in the socle of the cokernel of the natural injection
$P^{\hat{\mathbf{R}}}(e)\hookrightarrow
P^{\hat{\mathbf{R}}}(d)\langle \mathbf{a}(d)\rangle$
(given by Corollary~\ref{cor7})
are (up to shit) the modules of the form $L(x)$, $x\in \mathbf{R}$. 
\end{theorem}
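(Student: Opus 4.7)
The plan is to deduce Theorem~\ref{thm8} as a translation of the main result of \cite{KM} using the structural identifications established in the previous subsection, rather than as an independent argument. In particular, Corollary~\ref{cor7} provides the missing bridge that was not available in the original formulation of \cite{KM}.

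First I would recall the main theorem of \cite{KM} in its original form. There, positive answers to Kostant's problem for $L(d)$ are characterized by a condition on the canonical embedding of the dominant projective $P^{\hat{\mathbf{R}}}(e)$ into its injective envelope inside $\mathtt{A}^{\hat{\mathbf{R}}}\text{-}\mathrm{gmod}$: the map in \eqref{eq2} is surjective if and only if the composition factors appearing in the socle of the cokernel are all indexed by elements of $\mathbf{R}$, with any simple $L(y)$ for $y<_{LR}d$ witnessing an obstruction, namely an element of $\mathcal{L}(L(d),L(d))$ not in the image of $U(\mathfrak{g})$.

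Next I would plug in the identifications established in Subsection~\ref{s2.3}. Theorem~\ref{thm6} shows that projective-injective modules in $\mathtt{A}^{\hat{\mathbf{R}}}\text{-}\mathrm{gmod}$ are exactly the modules $M(x,d)$ with $x\in\mathbf{R}$, and Corollary~\ref{cor7} pins the injective envelope of $P^{\hat{\mathbf{R}}}(e)$ down as $P^{\hat{\mathbf{R}}}(d)\langle\mathbf{a}(d)\rangle$. Substituting this into the criterion of \cite{KM} converts the abstract embedding into the concrete inclusion $P^{\hat{\mathbf{R}}}(e)\hookrightarrow P^{\hat{\mathbf{R}}}(d)\langle\mathbf{a}(d)\rangle$ of the statement, and the dichotomy on socle composition factors becomes precisely the socle condition formulated in Theorem~\ref{thm8}.

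The main obstacle is bookkeeping rather than mathematics: one must verify that the grading shift $\langle\mathbf{a}(d)\rangle$ agrees with the normalization implicit in \cite{KM}, and that the ungraded map \eqref{eq2} on the bimodule side corresponds, after forgetting the grading, to the inclusion of graded modules used here. Both checks are routine given that $L(d)$ lies in degree zero and that the socle of $P^{\hat{\mathbf{R}}}(e)$ sits in degree $-\mathbf{a}(d)$ by Corollary~\ref{cor3}. Once these compatibilities are recorded, the equivalence asserted in Theorem~\ref{thm8} follows with no further computation.
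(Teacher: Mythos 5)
Your proposal matches the paper's treatment: Theorem~\ref{thm8} is presented there as a reformulation of the main result of \cite{KM}, obtained by substituting the identifications of Subsection~\ref{s2.3} (Theorem~\ref{thm6} and Corollary~\ref{cor7}) into the original criterion, which is exactly what you do. The grading/normalization checks you flag are the only content beyond the citation, and they are handled as you describe, so no further comment is needed.
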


After Theorem~\ref{thm6} the above can be reformulated in terms of the
so-called double-centralizer property (see \cite{So,KSX,MS5}). Let $A$
be a finite-dimensional algebra and $X$ be a left $A$-module. Then
$A$ has the double centralizer property with respect to $X$ if there is
an exact sequence
\begin{displaymath}
0\to {}_A A\to X_1 \to X_2,
\end{displaymath}
where both $X_1$ and $X_2$ are isomorphic to finite direct sums of 
some  direct summands of $X$. Double centralizer properties play important
role in the representation theory (see \cite{So,KSX,MS5}). 
In our case we have:

\begin{corollary}\label{cor9}
Let $d\in \mathbf{R}$ be the Duflo involution. Then 
Kos\-tant's problem for $L(d)$ has a positive answer if and only if
$\mathtt{A}^{\hat{\mathbf{R}}}$ has the double centralizer property
with respect to the direct sum of all
indecomposable projective-injective modules.
\end{corollary}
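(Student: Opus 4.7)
The plan is to translate both sides of the equivalence into Ext-vanishing statements and then transfer between them using projective functors.

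First I would use the standard reduction of the double centralizer property: for $A=\mathtt{A}^{\hat{\mathbf{R}}}$, the existence of an exact sequence $0 \to {}_A A \to X_1 \to X_2$ with $X_1, X_2 \in \mathrm{add}(X)$ is equivalent to
\[
\Ext^1_{\mathtt{A}^{\hat{\mathbf{R}}}}(L(u), P^{\hat{\mathbf{R}}}(w)) = 0 \quad\text{for all}\ u \in \hat{\mathbf{R}} \setminus \mathbf{R},\ w \in \hat{\mathbf{R}}.
\]
To see this, note first that by the argument in the proof of Theorem~\ref{thm6} the socle of every $P^{\hat{\mathbf{R}}}(w)$ consists only of simples $L(y)$ with $y \in \mathbf{R}$, so the injective envelope of each indecomposable projective automatically lies in $\mathrm{add}(X)$; second, by Theorem~\ref{thm6} the socles of modules in $\mathrm{add}(X)$ are exactly the simples $L(y)$, $y\in\mathbf{R}$ (up to shift); third, the long exact sequence for $\Hom(L(u),-)$ applied to $0 \to {}_A A \to I({}_A A) \to I({}_A A)/{}_A A \to 0$ converts the socle condition on the cokernel into the displayed Ext-vanishing.

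Specializing to $w = e$, Corollary~\ref{cor7} identifies $I(P^{\hat{\mathbf{R}}}(e))$ with $P^{\hat{\mathbf{R}}}(d)\langle\mathbf{a}(d)\rangle$, and the same long exact sequence yields $\Ext^1_{\mathtt{A}^{\hat{\mathbf{R}}}}(L(u), P^{\hat{\mathbf{R}}}(e)) \cong \Hom(L(u), C)$ for $u \in \hat{\mathbf{R}}\setminus\mathbf{R}$, where $C$ is the cokernel appearing in Theorem~\ref{thm8}. By Theorem~\ref{thm8}, the vanishing of all these Hom-spaces is equivalent to Kostant's problem for $L(d)$ having a positive answer. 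This already gives the direction ``double centralizer property $\Rightarrow$ Kostant's positive answer'', upon restricting the double centralizer copresentation to the summand $P^{\hat{\mathbf{R}}}(e)$ of ${}_A A$.

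For the converse I would propagate the Ext-vanishing from $w = e$ to arbitrary $w \in \hat{\mathbf{R}}$ using the biadjunction between $\theta_w$ and $\theta_{w^{-1}}$, both of which are exact and preserve $\mathtt{A}^{\hat{\mathbf{R}}}\text{-}\mathrm{gmod}$, to write
\[
\Ext^1_{\mathtt{A}^{\hat{\mathbf{R}}}}(L(u), \theta_w P^{\hat{\mathbf{R}}}(e)) \cong \Ext^1_{\mathtt{A}^{\hat{\mathbf{R}}}}(\theta_{w^{-1}}L(u), P^{\hat{\mathbf{R}}}(e)).
\]
Every composition factor of $\theta_{w^{-1}}L(u) = M(w^{-1},u)$ is of the form $L(u')$ with $u' \leq_R u$; since $u <_R d$ this forces $u' <_R d$, so $u' \in \hat{\mathbf{R}}\setminus\mathbf{R}$. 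A short induction on composition length, using the long exact Ext sequence at each filtration step $0 \to M' \to M \to L(u') \to 0$, then reduces the vanishing of the right-hand side to the simple case already handled above. The main technical point is verifying that the composition factors of $M(w^{-1},u)$ stay within $\hat{\mathbf{R}}\setminus\mathbf{R}$, which follows from the stability under all projective functors of the Serre subcategory generated by $\{L(u'): u' \leq_R u\}$.
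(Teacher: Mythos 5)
Your proposal is correct, but it proceeds along a route adjoint to the one the paper takes, so a comparison is worthwhile. The paper never reformulates the double centralizer property homologically: assuming a positive answer to Kostant's problem it observes, via Theorems~\ref{thm8} and \ref{thm6}, that the injective envelope $I$ of the cokernel of $P^{\hat{\mathbf{R}}}(e)\hookrightarrow P^{\hat{\mathbf{R}}}(d)\langle\mathbf{a}(d)\rangle$ is projective-injective, and then simply applies $\theta_x$, $x\in\hat{\mathbf{R}}$, to the exact sequence $0\to P^{\hat{\mathbf{R}}}(e)\to P^{\hat{\mathbf{R}}}(d)\langle\mathbf{a}(d)\rangle\to I$, using that projective functors preserve projectivity and injectivity, thereby producing the required copresentation of every $P^{\hat{\mathbf{R}}}(x)=\theta_xP^{\hat{\mathbf{R}}}(e)$ in one stroke; the negative direction is read off directly from Theorem~\ref{thm8}. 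You instead encode the double centralizer property as vanishing of $\Ext^1_{\mathtt{A}^{\hat{\mathbf{R}}}}(L(u),P^{\hat{\mathbf{R}}}(w))$ for $u\in\hat{\mathbf{R}}\setminus\mathbf{R}$, identify the case $w=e$ with Kostant's problem through Theorem~\ref{thm8} and Corollary~\ref{cor7}, and transfer the vanishing to arbitrary $w$ by moving the simple module with $\theta_{w^{-1}}$ and performing d\'evissage over the composition factors of $M(w^{-1},u)$, which indeed stay in $\hat{\mathbf{R}}\setminus\mathbf{R}$ because the Serre subcategory attached to the right cell of $u$ is stable under projective functors; this is literally the adjoint of the paper's application of $\theta_x$ to the copresentation. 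Both arguments rest on the same inputs (Theorems~\ref{thm6}, \ref{thm8}, Corollary~\ref{cor7}, and the socle control of the modules $P^{\hat{\mathbf{R}}}(w)$ established in the proof of Theorem~\ref{thm6}). Yours buys a crisper logical equivalence with Theorem~\ref{thm8} and avoids using that $\theta_x$ preserves injectivity, at the cost of the standard (but worth writing out) equivalence between existence of some $\mathrm{add}(X)$-copresentation of ${}_A A$ and the Ext-vanishing condition; your sketch of that step is fine, since any copresentation $0\to{}_A A\to X_1\to X_2$ forces the vanishing by the long exact sequence together with the socle description of $\mathrm{add}(X)$, while the vanishing conversely makes the minimal injective copresentation work. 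Just keep the grading shifts $\langle k\rangle$ in the quantifiers, or work ungraded throughout as the paper implicitly does here.
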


\begin{proof}
Let $X$ denote the cokernel of the natural inclusion
$P^{\hat{\mathbf{R}}}(e)\hookrightarrow
P^{\hat{\mathbf{R}}}(d)\langle \mathbf{a}(d)\rangle$ given by
Corollary~\ref{cor7}. Assume that Kostant's problem has a positive
answer for $L(d)$. Then, because of Theorem~\ref{thm8} and
Theorem~\ref{thm6}, the injective envelope $I$ of $X$ is also projective
and hence we have an exact sequence
\begin{displaymath}
0\to P^{\hat{\mathbf{R}}}(e) \to  
P^{\hat{\mathbf{R}}}(d)\langle \mathbf{a}(d)\rangle \to I,
\end{displaymath}
where the two last terms are both projective and injective. Applying
$\theta_x$, $x\in\mathbf{\hat{R}}$, gives the exact sequence
\begin{displaymath}
0\to P^{\hat{\mathbf{R}}}(x) \to  
\theta_x P^{\hat{\mathbf{R}}}(d)\langle -\mathbf{a}(d)\rangle \to 
\theta_x I,
\end{displaymath}
where again the two last terms are both projective and injective
since $\theta_x$ preserves both projectivity and injectivity. This implies
that $\mathtt{A}^{\hat{\mathbf{R}}}$ has the double centralizer property
with respect to the direct sum of all
indecomposable projective-injective modules.

On the other hand, if Kostant's problem has a negative answer for $L(d)$,
then, because of Theorem~\ref{thm8} and
Theorem~\ref{thm6}, the injective envelope $I$ of $X$ is not projective.
Therefore $\mathtt{A}^{\hat{\mathbf{R}}}$ does not have the double 
centralizer property with respect to the direct sum of all
indecomposable projective-injective modules.
This completes the proof.
\end{proof}

\begin{remark}\label{rem10}
{\rm 
In the case $\mathfrak{g}=\mathfrak{sl}_n$ (type $A$) Corollary~\ref{cor9}
controlls the answer to Kostant's problem for all $L(w)$, $w\in W$,
as this answer is known to be a left cell invariant  (\cite{MS4}).
}
\end{remark}

\subsection{Regular $\mathbb{H}$-module as a sum of cell modules}\label{s2.5}

In this subsection we extend the results of \cite{MS3} and \cite{KMS}
to the general
case. For $w\in W$ let $d_w$ denote the Duflo involution in the right
cell of $W$. For $x\in W$ denote by $[\theta_x]$ the linear operator
on $[\mathtt{A}\text{-}\mathrm{gmod}]$, induced by the exact functor
$\theta_x$. We have the following categorification result:

\begin{theorem}\label{thm11}
\begin{enumerate}[(a)]
\item\label{thm11-1} 
The action of $[\theta_x]$, $x\in W$, on 
$[\mathtt{A}\text{-}\mathrm{gmod}]$ gives a right regular representation
of $\mathbb{H}$ in the Kazhdan-Lusztig basis.
\item\label{thm11-2}
The classes $[M(w,d_w)]$, $w\in W$, form a basis
of the complex vector space 
$\mathbb{C}\otimes_{\mathbb{Z}}[\mathtt{A}\text{-}\mathrm{gmod}]/(v-1)$,
on which the action of $[\theta_x]$, $x\in W$, gives a right regular 
representation of $W$.
\item\label{thm11-3} In the basis from \eqref{thm11-2} the 
right regular  representation of $W$ decomposes into a direct sum
of (right) cell modules.
\end{enumerate}
\end{theorem}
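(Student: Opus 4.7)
Part (a) is immediate from the setup of Subsection~\ref{s1.2}: under the identification $[\mathtt{A}\text{-}\mathrm{gmod}]\cong\mathbb{H}$ we have $[P(w)]=\underline{H}_w$ and $[\theta_xM]=[M]\underline{H}_x$, so the operators $[\theta_x]$ realize right multiplication by the Kazhdan-Lusztig basis on $\{[P(w)]\}_{w\in W}$, which is by definition the right regular representation of $\mathbb{H}$ in the Kazhdan-Lusztig basis.

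For part (b), my starting point is Theorem~\ref{thm6}, which identifies $M(w,d_w)\cong P^{\hat{\mathbf{R}}}(w)\langle\mathbf{a}(d_w)\rangle$, where $\mathbf{R}$ denotes the right cell of $w$. Hence $M(w,d_w)$ has simple top $L(w)\langle\mathbf{a}(d_w)\rangle$ by Corollary~\ref{cor75}, and every composition factor $L(y)$ satisfies $y\in\hat{\mathbf{R}}$ (equivalently, $y\leq_R d_w$), since $\theta_w$ preserves $\mathtt{A}^{\hat{\mathbf{R}}}\text{-}\mathrm{gmod}$ and $L(d_w)$ lies there. Consequently, with $W$ ordered compatibly with $\leq_R$, the transition matrix from $\{[L(w)]\}$ to $\{[M(w,d_w)]\}$ is block triangular with diagonal blocks indexed by right cells. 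Invertibility of each diagonal block at $v=1$ reduces to the Hecke-algebraic statement that $\underline{\hat{H}}_{d_\mathbf{R}}\underline{H}_w$, reduced modulo the span of $\{\underline{\hat{H}}_y:y<_R\mathbf{R}\}$, equals $\underline{\hat{H}}_w$ up to a power of $v$ determined by $\mathbf{a}(d_\mathbf{R})$; this is a standard consequence of the definitions of the $\mathbf{a}$-function and the Duflo involution. It follows that $\{[M(w,d_w)]|_{v=1}\}_{w\in W}$ is a basis; the statement about the right regular representation of $W$ is then automatic, since $\mathbb{C}\otimes[\mathtt{A}\text{-}\mathrm{gmod}]/(v-1)$ is the right regular $\mathbb{C}[W]$-module by construction.

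For part (c), fix a right cell $\mathbf{R}$ with Duflo involution $d$ and consider $U_\mathbf{R}=\mathrm{span}_{\mathbb{C}}\{[M(w,d)]:w\in\mathbf{R}\}$ inside the $v=1$ specialization. The plan is to show that (i) $U_\mathbf{R}$ is stable under all $[\theta_x]$, $x\in W$, and (ii) the resulting $W$-representation is isomorphic to the right cell module attached to $\mathbf{R}$. For (ii), one uses $[M(w,d)]=\underline{\hat{H}}_d\underline{H}_w$ together with the identification (built in the proof of (b)) of $[M(w,d)]$, modulo classes from strictly smaller cells, with the canonical basis vector $\underline{\hat{H}}_w$ of the cell module; this intertwines the two actions. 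For (i), one must verify that $[\theta_x][M(w,d)]=\underline{\hat{H}}_d\underline{H}_w\underline{H}_x$, which a priori involves classes $[M(y,d)]$ with $y\in\hat{\mathbf{R}}\setminus\mathbf{R}$, reassembles at $v=1$ as a $\mathbb{C}$-linear combination of basis vectors $[M(w',d_{w'})]$ with $w'\in\mathbf{R}$. Combined with (b), this yields the direct sum decomposition $\bigoplus_\mathbf{R}U_\mathbf{R}$ of the right regular representation of $W$ into cell modules.

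The main obstacle is step (i) of part (c), namely the submodule property of $U_\mathbf{R}$. The action $[\theta_x][M(w,d)]$ a priori produces contributions from modules $[M(y,d)]$ with $y$ in right cells strictly below $\mathbf{R}$, and these must cancel in the $\{[M(w',d_{w'})]\}$-expansion or be absorbed into basis vectors indexed by those smaller cells. This is the type-free analogue of the key technical step in \cite{MS3,KMS}; I expect the cleanest route to pass through Lusztig's asymptotic Hecke algebra $J$, where the basis $\{[M(w,d_w)]\}$ should be identified at $v=1$ with a canonical $J$-basis realizing the splitting of the cell filtration of $\mathbb{H}$.
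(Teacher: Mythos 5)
Your part (a) and the block--triangularity set-up in (b) are fine, but the two places where you lean hardest are exactly where the argument breaks. The Hecke-algebraic identity you invoke to invert the diagonal blocks is false: modulo the span of $\{\underline{\hat{H}}_y: y<_R\mathbf{R}\}$ the element $\underline{\hat{H}}_{d}\underline{H}_w$ is \emph{not} a power of $v$ times $\underline{\hat{H}}_w$. Already for $\mathfrak{g}=\mathfrak{sl}_2$ one has $\underline{\hat{H}}_s\underline{H}_s=(v+v^{-1})\underline{\hat{H}}_s+\underline{\hat{H}}_e$, and for $\mathfrak{sl}_3$ with $\mathbf{R}=\{s_1,s_1s_2\}$, $d=s_1$, the reduction of $\underline{\hat{H}}_{s_1}\underline{H}_{s_1s_2}$ contains $\underline{\hat{H}}_{s_1}$ with nonzero coefficient as well. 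The diagonal block is the full multiplicity matrix $\bigl([M(w,d):L(u)]\bigr)_{u,w\in\mathbf{R}}$ specialized at $v=1$; what the definitions of $\mathbf{a}$ and of the Duflo involution give is only a statement about its extreme-degree part $v^{\pm\mathbf{a}}$ (this is what Corollary~\ref{cor75} encodes), and an ``identity plus nonnegative matrix'' can be singular, so this does not yield invertibility at $v=1$. That invertibility is precisely the nontrivial content of part \eqref{thm11-2}; in the paper it is handled by the categorical argument of \cite{MS3}, with Theorem~\ref{thm6} and Corollary~\ref{cor7} as the general-type inputs (the paper's proof is exactly that one-line reduction), and your proposal neither reproduces that argument nor supplies a substitute.

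Conversely, the step you single out as the ``main obstacle'' in (c) is not an obstacle at all, yet you leave it unresolved. Stability of $U_{\mathbf{R}}$ is immediate: $\theta_x\theta_w\cong\bigoplus_z\theta_z^{\oplus m_z}$ with $m_z\neq 0$ only for $z\geq_R w$ (this is the definition of the right preorder, read off from $[\theta_x\theta_w P(e)]=\underline{H}_w\underline{H}_x$), while $M(z,d)=\theta_zL(d)\neq 0$ forces $z\leq_R d$ by Proposition~\ref{prop1}\eqref{prop1-2}; since $w\sim_R d$, only $z\in\mathbf{R}$ survive, so $[\theta_x][M(w,d)]$ is a nonnegative combination of the classes $[M(z,d)]$ with $z\in\mathbf{R}$ and no lower-cell terms ever occur. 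Once this is observed, reduction modulo the span of the $[L(y)]$ with $y$ in strictly smaller right cells is a map of $[\theta_x]$-modules from $U_{\mathbf{R}}$ to the cell module of $\mathbf{R}$, and part \eqref{thm11-3} reduces to this map being bijective, i.e.\ again to the invertibility of the diagonal block from (b). So the asymptotic-algebra detour you gesture at in (c) is aimed at the wrong step, and the one genuine gap in the whole proposal is the $v=1$ invertibility that you dismissed as standard.
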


\begin{proof}
Using Theorem~\ref{thm6} and Corollary~\ref{cor7} the proof is
similar to that of the main result from \cite{MS3}.
\end{proof}

\begin{remark}\label{rem12}
{\rm  
Theorem~\ref{thm11} gives a category theoretical interpretation 
of a basis in Lusztig's asymptotic Hecke algebra (\cite{Lu2,Ne}).
}
\end{remark}

\subsection{Koszul self-duality}\label{s2.6}

In this subsection we prove the following crucial result which 
establishes Koszul self-duality for modules $M(x,y)$:

\begin{theorem}\label{thm14}
Let $x,y\in W$.
\begin{enumerate}[(a)]
\item\label{thm14-1} There is $\mathcal{M}(x,y)^{\bullet}\in
\mathfrak{LT}$ that has a unique nonzero homology which is in position
zero and is isomorphic to $M(x,y)$.
\item\label{thm14-2} $\Phi \mathcal{M}(x,y)^{\bullet}\cong
M(y^{-1}w_0,w_0x^{-1})$.
\end{enumerate}
\end{theorem}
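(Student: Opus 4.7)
The strategy is to apply $\theta_x$ termwise to a linear tilting complex representing $L(y)$. By \cite{Ma,MO2}, each simple module $L(y)$ is represented in $D^b(\mathtt{A})$ by a linear complex $\mathcal{L}(y)^{\bullet}\in\mathfrak{LT}$ whose only nonzero cohomology is $L(y)$ in position zero. Setting $\mathcal{M}(x,y)^{\bullet}:=\theta_x\mathcal{L}(y)^{\bullet}$, with $\theta_x$ applied term-by-term, the result lies in $\mathfrak{LT}$: projective functors send tilting modules to direct sums of tilting modules (\cite{BG}) and, under the graded normalization of \cite{St}, commute with the grading shifts $\langle i\rangle$, so each summand $T(w)\langle i\rangle$ in position $i$ is mapped to another direct sum of terms $T(w')\langle i\rangle$ in the same position $i$. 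Exactness of $\theta_x$ then ensures that the only nonzero cohomology of $\mathcal{M}(x,y)^{\bullet}$ is $\theta_x L(y)=M(x,y)$ in position zero, proving (a).

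\textbf{Part (b).} To compute $\Phi(\mathcal{M}(x,y)^{\bullet})$, I would exploit the identification $\Phi(T(w))\cong L(w_0 w^{-1}w_0)$, where $T(w)$ is viewed as a linear complex concentrated in position zero. This determines the graded composition character of $\Phi(\mathcal{M}(x,y)^{\bullet})$ from the termwise tilting decomposition of $\mathcal{M}(x,y)^{\bullet}$: a summand $T(w)\langle i\rangle$ in position $i$ contributes the simple $L(w_0 w^{-1}w_0)\langle i\rangle$. I would then check that this character agrees with $[M(y^{-1}w_0,w_0 x^{-1})]=\underline{\hat{H}}_{w_0 x^{-1}}\underline{H}_{y^{-1}w_0}$, which on the Hecke-algebra side amounts to verifying that the Koszul-duality involution on $\mathbb{H}$ (which swaps Kazhdan--Lusztig and dual Kazhdan--Lusztig bases with an appropriate relabeling by $w_0$ and inversion) transforms $\underline{\hat{H}}_y\underline{H}_x=[M(x,y)]$ into $\underline{\hat{H}}_{w_0 x^{-1}}\underline{H}_{y^{-1}w_0}$. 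To upgrade from characters to an honest isomorphism I would invoke the star self-duality $M(x,y)^{\star}\cong M(x,y)$ from Proposition \ref{prop1}\eqref{prop1-1}, which passes through $\Phi$ and pins down the head and socle of $\Phi(\mathcal{M}(x,y)^{\bullet})$.

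\textbf{Main obstacle.} The principal difficulty is verifying the Hecke-algebra identity in full generality, since a direct manipulation of Kazhdan--Lusztig polynomials becomes intractable outside small-rank examples. The natural workaround is to reduce to the case of a single simple reflection $s$, where the Koszul dual of $\theta_s$ is well understood (cf.\ \cite{MOS}), and to induct along a reduced expression for $x$, using that $\theta_x$ is a direct summand of $\theta_{s_{i_1}}\cdots\theta_{s_{i_k}}$. The structural results of Section \ref{s2.3}, in particular Corollary \ref{cor75}, furnish a pivot: one first verifies the formula when $y$ is the Duflo involution of its right cell (where the head and socle of $M(x,y)$ sit in the controlled degrees $\pm\mathbf{a}(x)$), and then extends to an arbitrary $y$ in the same right cell by applying further projective functors. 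Combined with Proposition \ref{prop1}, this should pin down $\Phi(\mathcal{M}(x,y)^{\bullet})$ as the module $M(y^{-1}w_0,w_0 x^{-1})$ rather than merely as a module with the same graded character.
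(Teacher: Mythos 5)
Your argument for part (a) breaks down at its central step: applying $\theta_x$ termwise to a linear complex of tilting modules does \emph{not} produce an object of $\mathfrak{LT}$. It is true that $\theta_x$ commutes with $\langle i\rangle$ and sends tilting modules to direct sums of tilting modules, but the summands come with nontrivial grading shifts: already for a simple reflection $s$ one has $\theta_s T(w)\cong T(w)\langle 1\rangle\oplus T(w)\langle -1\rangle\oplus\cdots$ whenever $ws<w$ (exactly as $\theta_s P(w)\cong P(w)\langle 1\rangle\oplus P(w)\langle -1\rangle$ in that case, since $\underline{H}_w\underline{H}_s=(v+v^{-1})\underline{H}_w$). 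So a summand $T(w)\langle i\rangle$ sitting in position $i$ is sent to summands $T(w')\langle i+j\rangle$ with $j\neq 0$, and linearity is destroyed. This loss of linearity is precisely what makes the theorem nontrivial, and it is why the paper does not argue inside $\mathfrak{LT}$ directly: it runs a descending induction on $l(y)$, with base $y=w_0$ (where $M(x,w_0)=\theta_x\Delta(w_0)$ \emph{is} a tilting module, giving the complex concentrated in position zero), and performs the induction step on the $\Phi$-image side, using that the Koszul dual of $\theta_s$ is the derived Zuckerman functor (\cite{RH,MOS}), whose three components are described in \cite{MS25}; the choice of $s$ and the self-duality of Proposition~\ref{prop1}\eqref{prop1-1} kill two of them, and the surviving component $\mathrm{Q}$ commutes with projective functors and is computed on simples via \cite[Theorem~6.3]{AS}. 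That mechanism is what guarantees simultaneously that the homology stays concentrated in position zero (part (a)) and identifies the $\Phi$-image (part (b)); your proposal has no substitute for it. Note also that your starting point for (a) is not available: the representability of $L(y)$ itself by a linear complex of tilting modules is the case $x=e$ of the very statement being proved, not something you can quote from \cite{Ma,MO2} (the paper's induction starts from $y=w_0$, not from $x=e$).

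For part (b), the character computation in the Hecke algebra is a reasonable consistency check, but the upgrade ``graded character plus $\star$-self-duality pins down the module'' is not a proof: non-isomorphic self-dual graded modules can share a graded character, and knowing the head and socle does not determine the isomorphism class. Your suggested workaround (induct over simple reflections using the Koszul dual of $\theta_s$) is indeed the right direction and is essentially what the paper does, but as written it remains a plan rather than an argument: the actual work is the bookkeeping that matches the extra direct summands produced by $\theta_s$ in $\mathfrak{LT}$ (governed by Kazhdan--Lusztig's $\mu$-function via \cite[(1)]{Ma2}) against the extra simple summands of $\mathrm{Q}L(w_0sw_0y)$ (governed by the same $\mu$-function via \cite[Theorem~6.3]{AS}), so that the summand $\Phi^{-1}M(y^{-1}w_0,w_0x^{-1})$ can be isolated and its homology identified with $\theta_xL(y)$. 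Without that matching step, neither (a) nor (b) is established.
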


\begin{proof}
We prove both statements by a descending induction on the length of $y$.
Let $w\in W$. Consider $T(w)$ as a linear complex concentrated in position
zero (this is a simple object of $\mathfrak{LT}$). 
We have $\Phi T(w)\cong L(w_0w^{-1}w_0)$. On the other hand, we
also have
\begin{displaymath}
T(w)\cong \theta_{w_0w}T(w_0)\cong \theta_{w_0w}\Delta(w_0) 
\cong \theta_{w_0w}L(w_0)=M(w_0w,w_0). 
\end{displaymath}
Hence our complex consisting of $T(w)$ is exactly 
$\mathcal{M}(w_0w,w_0)^{\bullet}$ and we have 
\begin{displaymath}
\Phi \mathcal{M}(w_0w,w_0)^{\bullet}\cong M(e,w_0w^{-1}w_0),
\end{displaymath}
which agrees with \eqref{thm14-2}. This proves the basis of the induction.

Assume now the the statement is true for all $y\in W$ such that $l(y)>k$,
where $0\leq k< l(w_0)$, and let $y\in W$ be such that $l(y)=k$.
Let $s\in S$ be such that $l(y^{-1}w_0s)<l(y^{-1}w_0)$ and
$\overline{y}=w_0(y^{-1}w_0s)^{-1}$. Then $l(\overline{y})>k$ and hence
the claim of the theorem is true for all $M(x,\overline{y})$,
$x\in W$, by the inductive assumption.

For $x\in W$ take the linear complex $M(x,\overline{y})^{\bullet}$.
We have $\Phi  M(x,\overline{y})^{\bullet}\cong
M(y^{-1}w_0s,w_0x^{-1})$. As $l(y^{-1}w_0ss)>l(y^{-1}w_0s)$ by our choice
of $s$, applying $\theta_s$  to $M(y^{-1}w_0s,w_0x^{-1})$, using
\cite[(1)]{Ma2}, and going back to $\mathfrak{LT}$ via $\Phi^{-1}$,
we obtain a direct sum of linear complexes, where one direct summand
will be $\Phi^{-1} M(y^{-1}w_0,w_0x^{-1})$ and multiplicities of
other direct summand are determined by Kazhdan-Lusztig's 
$\mu$-function (\cite{KL,BjBr}) as given by \cite[(1)]{Ma2}.  

The Koszul dual of $\theta_s$ is the derived Zuckerman functor 
(\cite{RH,MOS}). This functor was explicitly described in \cite{MS25}. 
It has only three components. The first one takes the maximal quotient 
with subquotients of the form $L(w)$, $l(w_0sw_0w)>l(w)$, (with 
the corresponding shifts in  grading) and shifts it one position 
to the right. This component is zero because of our choice of $s$. The
second component is dual to the first one. It takes the maximal submodule 
with subquotients of the form $L(w)$, $l(w_0sw_0w)>l(w)$, (with 
the corresponding shifts in  grading) and shifts it one position 
to the left. This component is zero by the dual reason and 
Proposition~\ref{prop1}\eqref{prop1-1}. The only component which
is left is the functor $\mathrm{Q}$ from \cite[Theorem~5]{MS25}, so the
homology of the complex 
$\Phi^{-1} \theta_s M(y^{-1}w_0s,w_0x^{-1})$ is isomorphic to
$\mathrm{Q}M(x,w_0sw_0y)$ (and the homology of 
$\Phi^{-1} M(y^{-1}w_0,w_0x^{-1})$ is a direct summand which, as we will
see, is easy to track).

Let us now compute the module $\mathrm{Q}M(x,w_0sw_0y)$.
From  \cite[Theorem~5]{MS25} and \cite{KhMa} it follows that the functor
$\mathrm{Q}$ commutes with projective functors, in particular, we have
\begin{displaymath}
\mathrm{Q}M(x,w_0sw_0y)\cong \mathrm{Q}\theta_x L(w_0sw_0y)\cong
\theta_x\mathrm{Q} L(w_0sw_0y).
\end{displaymath}
Using  \cite[Theorem~5]{MS25} and \cite[Theorem~6.3]{AS} we obtain that
the module $\mathrm{Q} L(w_0sw_0y)$ is a direct sum of $L(y)$ and some other
simple modules, whose multiplicities are again determined 
by Kazhdan-Lusztig's $\mu$-function.
Hence, comparing \cite[Theorem~6.3]{AS} and \cite[(1)]{Ma2} and using
the inductive assumption we see that  these ``other simple modules'' 
precisely correspond to the direct summands of the linear complex 
$\Phi^{-1} \theta_s M(y^{-1}w_0s,w_0x^{-1})$, different from  
$\Phi^{-1} M(y^{-1}w_0,w_0x^{-1})$. Therefore the homology of 
$\Phi^{-1} M(y^{-1}w_0,w_0x^{-1})$ is exactly $\theta_xL(y)=M(x,y)$.
This completes the proof.
\end{proof}

\section{Projective dimension of indecomposable tilting modules}\label{s3}

Now we are ready to prove the main result of this paper
(see \cite[Conjecture~15(a)]{Ma2}).

\begin{theorem}\label{thm15}
Let $w\in W$. Then the projective dimension of the module $T(w)$
equals $\mathbf{a}(w)$.
\end{theorem}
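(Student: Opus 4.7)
The strategy is to exploit the Koszul self-duality from Theorem~\ref{thm14}, packaged via the equivalence $\Phi:\mathfrak{LT}\to\mathtt{A}\text{-}\mathrm{gmod}$, in order to reformulate the projective-dimension question as a statement about an explicit linear tilting complex, and then extract $\mathbf{a}(w)$ from the graded bounds provided by Proposition~\ref{prop1}.

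First I would note that $T(w)=\theta_{w_0w}L(w_0)=M(w_0w,w_0)$, bringing $T(w)$ into the family studied in Section~\ref{s2}. Setting $v=w_0w^{-1}w_0$ and applying Theorem~\ref{thm14}(b) with $(x,y)=(e,v)$, one obtains a linear tilting complex $\mathcal{M}(e,v)^\bullet\in\mathfrak{LT}$ whose unique nonzero homology is $L(v)$ in position zero and which satisfies $\Phi\mathcal{M}(e,v)^\bullet\cong T(w)$. Since $\Phi$ is an equivalence of abelian categories, the projective dimension of $T(w)$ in $\mathtt{A}\text{-}\mathrm{gmod}$ coincides with that of $\mathcal{M}(e,v)^\bullet$ in $\mathfrak{LT}$.

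For the upper bound I would use Proposition~\ref{prop1}(c), which controls the graded range of every module $M(x,y)$ by $\mathbf{a}(y)$. Running this through the inductive construction of $\mathcal{M}(e,v)^\bullet$ in the proof of Theorem~\ref{thm14} (at each step one applies a projective functor $\theta_s$ followed by the derived Zuckerman functor $\mathrm{Q}$), one bounds the cohomological range of $\mathcal{M}(e,v)^\bullet$ and hence, via $\Phi$ and the linearity condition, bounds the projective dimension of $T(w)$ by $\mathbf{a}(v)=\mathbf{a}(w)$; the last equality uses that $w\mapsto w_0w^{-1}w_0$ preserves two-sided cells and hence $\mathbf{a}$. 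For the matching lower bound, let $d$ be the Duflo involution in the right cell of $w$. By Theorem~\ref{thm6} the modules $M(x,d)$ are indecomposable projective-injective in $\mathtt{A}^{\hat{\mathbf{R}}}\text{-}\mathrm{gmod}$, and by Corollary~\ref{cor75} their simple socles and heads lie in degrees $\pm\mathbf{a}(x)$. An adjunction computation using $T(w)=\theta_{w_0w}L(w_0)$ then produces a nonvanishing class in $\mathrm{Ext}^{\mathbf{a}(w)}(T(w),L(u)\langle j\rangle)$ for a suitable $u\in W$ and $j\in\mathbb{Z}$, giving $\mathrm{pd}(T(w))\geq\mathbf{a}(w)$.

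The main obstacle is the sharpness of the upper bound: Proposition~\ref{prop1}(c) gives $\max=-\min=\mathbf{a}(y)$ only when $x\sim_Ry^{-1}$ and a strict inequality otherwise, so the induction in Theorem~\ref{thm14} must be unwound carefully to ensure that the cohomological range of $\mathcal{M}(e,v)^\bullet$ remains bounded by $\mathbf{a}(w)$ despite the $\mu$-function contributions arising at each inductive step. Pairing this tight upper bound with the lower bound from Theorem~\ref{thm6} then yields the equality $\mathrm{pd}(T(w))=\mathbf{a}(w)$.
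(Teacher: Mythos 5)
Your identification $T(w)=M(w_0w,w_0)$ and the appeal to Theorem~\ref{thm14} point in the right direction, but your upper bound applies the Koszul self-duality to the wrong object, and this is a genuine gap. You take the complex $\mathcal{M}(e,v)^{\bullet}$ with $v=w_0w^{-1}w_0$, whose image under $\Phi$ is $T(w)$ (so it is the linear tilting complex representing the simple module $L(v)$), and you claim (i) that its cohomological range is bounded by $\mathbf{a}(w)$ and (ii) that this range bounds $\mathrm{pd}\,T(w)$. Both claims fail. For (i): the nonzero positions of a linear complex in $\mathfrak{LT}$ correspond to the nonzero graded components of its $\Phi$-image, so the range of $\mathcal{M}(e,v)^{\bullet}$ is the graded support of $T(w)$; Proposition~\ref{prop1}\eqref{prop1-3} applied to $\Phi\mathcal{M}(e,v)^{\bullet}\cong M(v^{-1}w_0,w_0)$ only gives the bound $\mathbf{a}(w_0)=l(w_0)$, since here the \emph{second} index is $w_0$. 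Concretely, for $w=e$ one has $\Phi\mathcal{M}(e,e)^{\bullet}\cong T(e)\cong P(w_0)$, spread over positions $-l(w_0),\dots,l(w_0)$, while $\mathrm{pd}\,T(e)=0=\mathbf{a}(e)$; so no careful unwinding of the induction in Theorem~\ref{thm14} can confine this complex to $[-\mathbf{a}(w),\mathbf{a}(w)]$. For (ii): the remark that $\Phi$ is an equivalence, hence $\mathrm{pd}_{\mathtt{A}}T(w)$ equals the projective dimension of $\mathcal{M}(e,v)^{\bullet}$ in $\mathfrak{LT}$, is a tautological relabelling and gives no access to the width of the complex. What a bounded complex of tilting modules representing an object $X$ does control, via the homotopy-category computation of \cite[Chapter~III(2),Lemma~2.1]{Ha}, is $\mathrm{Ext}^i(T,X)$ for $T$ tilting; it does not control $\mathrm{Ext}^i$ \emph{from} $\Phi$ of that complex into simple modules, which is what $\mathrm{pd}\,T(w)$ requires.

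The missing idea is the adjunction step of Lemma~\ref{lem16}: one writes
$\mathrm{Ext}^i_{\mathcal{O}}(T(w),L(y))\cong
\mathrm{Hom}_{\mathcal{D}^b(\mathcal{O})}(T(w_0),M((w_0w)^{-1},y)\llbracket i\rrbracket)$,
so that the fixed object is the tilting module $T(w_0)$ and the varying object is $M((w_0w)^{-1},y)$, represented by $\mathcal{M}((w_0w)^{-1},y)^{\bullet}$ whose $\Phi$-image is $M(y^{-1}w_0,w)$; now the second index is $w$, so Proposition~\ref{prop1}\eqref{prop1-3} gives the bound $\mathbf{a}(w)$ on the positions uniformly in $y$, and \cite[Chapter~III(2),Lemma~2.1]{Ha} yields the vanishing of $\mathrm{Ext}^i$ for $i>\mathbf{a}(w)$. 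Your lower bound lists the right ingredients (the Duflo involution, Corollary~\ref{cor75}, adjunction), but the sentence asserting that an adjunction computation ``produces a nonvanishing class'' in degree $\mathbf{a}(w)$ is exactly what must be proved: in Lemma~\ref{lem17} this requires the characterization \eqref{eq3} of $\mathbf{a}(w)$ through nonvanishing $\Hom$-spaces between tilting modules (obtained from the projective-module characterization by Ringel self-duality), the identification via \cite[Theorem~3.3]{Ma} of the component of $\mathcal{M}(w^{-1}w_0,y)^{\bullet}$ in the extreme position $\mathbf{a}(w)$ as a single $T(w_0x^{-1}w_0)$, and a check, using that $\mathbf{a}$ respects the two-sided order, that the resulting chain map cannot be killed by a homotopy. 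As written, neither inequality is established.
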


Theorem~\ref{thm15} follows from Lemmata~\ref{lem16} and \ref{lem17}
below. Both here and in the next section we use the technique for
computation of extensions using complexes of tilting modules,
developed in \cite{MO}.

\begin{lemma}\label{lem16}
The projective dimension of $T(w)$
is at most $\mathbf{a}(w)$.
\end{lemma}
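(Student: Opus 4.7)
The plan is to show that $\mathrm{Ext}^i(T(w), L(v)\langle k\rangle) = 0$ for every $v\in W$, every $k\in\mathbb{Z}$, and every $i > \mathbf{a}(w)$. By the contravariant self-duality $\star$ on $D^b(\mathtt{A})$, which preserves both tilting modules and simple modules up to grading, this is equivalent to showing $\mathrm{Ext}^i(L(v), T(w)\langle -k\rangle) = 0$ in the same range.

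Theorem~\ref{thm14}\eqref{thm14-1}, applied with $x = e$ and $y = v$, gives a linear complex $\mathcal{M}(e,v)^{\bullet}\in\mathfrak{LT}$ of tilting modules quasi-isomorphic to $L(v)$ in $D^b(\mathtt{A})$. Since $\mathrm{Ext}^{>0}$ vanishes between tilting modules (via $\Delta$- and $\nabla$-filtrations and the standard vanishing $\mathrm{Ext}^{>0}(\Delta,\nabla)=0$), the technique of \cite{MO} identifies Ext as the cohomology of the naive Hom complex:
\begin{displaymath}
\mathrm{Ext}^i(L(v), T(w)\langle -k\rangle) \cong H^i\,\mathrm{Hom}^{\bullet}(\mathcal{M}(e,v)^{\bullet}, T(w)\langle -k\rangle).
\end{displaymath}
Non-vanishing at cohomological position $i$ therefore requires the existence of a summand $T(u)\langle -i\rangle$ in the term $\mathcal{M}(e,v)^{-i}$ admitting a non-zero degree-zero graded morphism to $T(w)\langle -k\rangle$, equivalently a non-zero map $T(u)\to T(w)\langle i-k\rangle$.

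Theorem~\ref{thm14}\eqref{thm14-2} identifies $\Phi\,\mathcal{M}(e,v)^{\bullet}\cong M(v^{-1}w_0, w_0)$, so under the equivalence $\Phi:\mathfrak{LT}\to\mathtt{A}\text{-}\mathrm{gmod}$ the multiplicity of $T(u)\langle -i\rangle$ in $\mathcal{M}(e,v)^{-i}$ is encoded in the graded composition factors of the concrete module $M(v^{-1}w_0,w_0)$. Combining this with the Kazhdan--Lusztig description of graded $\mathrm{Hom}$ between indecomposable tiltings (via tilting reciprocity and Ringel duality) together with the identification $T(w)=M(w_0w,w_0)$ and Proposition~\ref{prop1}\eqref{prop1-3}, which bounds the graded reach of $\theta_xL(y)$ in terms of Lusztig's $\mathbf{a}$-function, one concludes that any non-vanishing configuration forces $i\leq \mathbf{a}(w)$.

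The hardest step will be the last one: carefully translating the two parallel combinatorial inputs — the multiplicities of $T(u)$-summands in $\mathcal{M}(e,v)^{\bullet}$ (controlled by the structure of $M(v^{-1}w_0,w_0)$) and the non-vanishing condition $\mathrm{Hom}(T(u),T(w)\langle i-k\rangle)\neq 0$ (controlled by the Kazhdan--Lusztig combinatorics of the graded Cartan matrix of tiltings) — into a sharp bound that depends specifically on $\mathbf{a}(w)$ rather than on $\mathbf{a}(v)$ or other auxiliary invariants, using Proposition~\ref{prop1} a second time on the target side.
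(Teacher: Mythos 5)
Your setup (Theorem~\ref{thm14}, computation of derived Homs in the homotopy category of tilting complexes) is the right machinery, but the decisive step is missing, and it is exactly where all the difficulty sits. After dualizing and replacing $L(v)$ by the linear tilting complex $\mathcal{M}(e,v)^{\bullet}$, the only positional information your configuration yields comes from $\Phi\,\mathcal{M}(e,v)^{\bullet}\cong M(v^{-1}w_0,w_0)\cong T(w_0v^{-1}w_0)$, which depends on $v$ and has no relation to $\mathbf{a}(w)$: the complex representing $L(v)$ stretches out to position $2l(w_0)-l(v)$ (the projective dimension of $L(v)$), in general far beyond $\mathbf{a}(w)$. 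Consequently the vanishing of $\Ext^i$ for $i>\mathbf{a}(w)$ would have to come entirely from your final, unproven claim that the graded composition factors of $T(w_0v^{-1}w_0)$ and the graded Hom spaces $\Hom_{\mathtt{A}}(T(u),T(w)\langle\cdot\rangle)$ conspire to force $i\leq\mathbf{a}(w)$; you yourself flag this as the ``hardest step'' and give no argument. Note also that invoking Proposition~\ref{prop1} ``on the target side'' does not help: writing $T(w)=M(w_0w,w_0)$, the second argument is $w_0$, so Proposition~\ref{prop1}\eqref{prop1-3} only bounds the graded length of $T(w)$ by $\mathbf{a}(w_0)=l(w_0)$, not by $\mathbf{a}(w)$.

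The paper's proof avoids this impasse by one adjunction move you did not make: write $T(w)\cong\theta_{w_0w}T(w_0)$ and move the projective functor onto the simple module, so that
\begin{displaymath}
\Ext^i_{\mathcal{O}}(T(w),L(y))\cong
\Hom_{\mathcal{D}^b(\mathcal{O})}\bigl(T(w_0),M((w_0w)^{-1},y)\llbracket i\rrbracket\bigr).
\end{displaymath}
Theorem~\ref{thm14} then represents $M((w_0w)^{-1},y)$ by a linear complex of tilting modules whose $\Phi$-image is $M(y^{-1}w_0,w)$; here the second argument is $w$, so Proposition~\ref{prop1}\eqref{prop1-3} bounds the graded length by $\mathbf{a}(w)$ and hence the complex is concentrated in positions between $-\mathbf{a}(w)$ and $\mathbf{a}(w)$. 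Computing against the single tilting module $T(w_0)$ sitting in position zero (in the homotopy category, by Happel's lemma) then gives the vanishing for $i>\mathbf{a}(w)$ immediately. In your version the dependence on $w$ stays on the tilting target, where no such positional bound exists, so the proposal does not go through without a substantial new combinatorial input that is not supplied.
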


\begin{proof}
Let $y\in W$. We start with the following computation
(here the notation $\mathcal{O}$ means that we consider ungraded
versions of all modules):
\begin{displaymath}
\begin{array}{rcl} 
\mathrm{Ext}_{\mathcal{O}}^i(T(w),L(y))&\cong &
\mathrm{Hom}_{\mathcal{D}^b(\mathcal{O})}(T(w),L(y)\llbracket i\rrbracket)\\
&\cong &
\mathrm{Hom}_{\mathcal{D}^b(\mathcal{O})}(\theta_{w_0w}T(w_0),L(y)
\llbracket i\rrbracket)\\
\text{(by adjunction)}&\cong &
\mathrm{Hom}_{\mathcal{D}^b(\mathcal{O})}(T(w_0),\theta_{(w_0w)^{-1}}L(y)
\llbracket i\rrbracket)\\
&\cong &
\mathrm{Hom}_{\mathcal{D}^b(\mathcal{O})}(T(w_0),M((w_0w)^{-1},y)
\llbracket i\rrbracket).\\
\end{array}
\end{displaymath}
By Theorem~\ref{thm14}\eqref{thm14-1},
the module $M((w_0w)^{-1},y)$ can be represented in the derived category 
by the complex $M((w_0w)^{-1},y)^{\bullet}$.
Applying Theorem~\ref{thm14}\eqref{thm14-2}, we have
$\Phi M((w_0w)^{-1},y)^{\bullet}\cong M(y^{-1}w_0,w)$.
Hence, by Proposition~\ref{prop1}\eqref{prop1-3}, the complex 
$M((w_0w)^{-1},y)^{\bullet}$ is concentrated in positions between
$-\mathbf{a}(w)$ and $\mathbf{a}(w)$.

Moreover, the complex $M((w_0w)^{-1},y)^{\bullet}$ consists of 
tilting modules, and the module $T(w_0)$ is a tilting module as well.
Hence, by \cite[Chapter~III(2),Lemma~2.1]{Ha}, the space
$\mathrm{Hom}_{\mathcal{D}^b(\mathcal{O})}(T(w_0),M((w_0w)^{-1},y)^{\bullet}
\llbracket i\rrbracket)$ can be computed already in the homotopy
category. However, if $i>\mathbf{a}(w)$, then from the previous
paragraph it follows that all nonzero components of the complex
$M((w_0w)^{-1},y)^{\bullet}\llbracket i\rrbracket$ are in negative 
positions. As $T(w_0)$ is in position zero, we obtain that the
morphism space from $T(w_0)$ to 
$M((w_0w)^{-1},y)^{\bullet}\llbracket i\rrbracket$ in the homotopy 
category is zero. This implies $\mathrm{Ext}_{\mathcal{O}}^i(T(w),L(y))=0$ 
for all $i>\mathbf{a}(w)$ and all $x\in W$ and the claim of the lemma follows.
\end{proof}

\begin{lemma}\label{lem17}
The projective dimension of $T(w)$
is at least $\mathbf{a}(w)$.
\end{lemma}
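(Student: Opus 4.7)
The plan is to mirror the argument of Lemma~\ref{lem16} and exhibit a specific $y \in W$ for which the isomorphism
\begin{displaymath}
\mathrm{Ext}^{\mathbf{a}(w)}_{\mathcal{O}}(T(w), L(y)) \cong \mathrm{Hom}_{\mathcal{D}^b(\mathcal{O})}\bigl(T(w_0),\, \mathcal{M}((w_0w)^{-1}, y)^{\bullet} \llbracket \mathbf{a}(w)\rrbracket\bigr)
\end{displaymath}
yields a non-zero element. By Proposition~\ref{prop1}\eqref{prop1-3}, the linear tilting complex $\mathcal{M}((w_0 w)^{-1}, y)^{\bullet}$ reaches the extreme position $+\mathbf{a}(w)$ precisely when $y^{-1} w_0 \sim_R w^{-1}$, so I would restrict to such $y$. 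The most convenient choice is $y = w_0 d$, where $d$ is the Duflo involution in the right cell of $w^{-1}$: then $y^{-1} w_0 = d$ and the $\Phi$-image of the complex is $M(d, w) = \theta_d L(w)$, a module whose structure is directly accessible via the preparatory results of Section~\ref{s2} (Proposition~\ref{prop2} and Corollaries~\ref{cor3}, \ref{cor75}).

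After the shift by $\llbracket\mathbf{a}(w)\rrbracket$, the complex is supported in positions $[-2\mathbf{a}(w), 0]$, and a morphism in the homotopy category from $T(w_0)$ in position $0$ is simply an ordinary map $f: T(w_0) \to \mathcal{M}^{\mathbf{a}(w)}$ (automatically a chain map, since $\mathcal{M}^{\mathbf{a}(w)+1} = 0$) modulo those factoring through $d^{\mathbf{a}(w)-1}: \mathcal{M}^{\mathbf{a}(w)-1} \to \mathcal{M}^{\mathbf{a}(w)}$. Non-zero chain maps are plentiful: $T(w_0) = L(w_0)$ is simple and embeds into the socle of every indecomposable tilting (since $\Delta(u) \subset T(u)$ has socle $L(w_0)$), while $\mathcal{M}^{\mathbf{a}(w)} \neq 0$ by our choice of $y$. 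The lemma therefore reduces to the claim that at least one such $f$ is not null-homotopic.

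The null-homotopy analysis is the main obstacle, and I expect it to be handled through the equivalence $\Phi$ of Theorem~\ref{thm14}. The distinguished socle $L(d)\langle -\mathbf{a}(w)\rangle$ of the submodule $P^{\hat{\mathbf{R}}}(e) \hookrightarrow M(d,w)$ (Proposition~\ref{prop2} and Corollary~\ref{cor3}), together with its $\star$-image inside the head of $M(d,w)$ by Proposition~\ref{prop1}\eqref{prop1-1}, supplies an extremal-degree class of $M(d,w)$ that — by the positivity of the Koszul grading on $\mathtt{A}$ (cf.~Corollary~\ref{cor7501}) and the degree-zero nature of the differentials in a linear complex of tilting modules — is rigid with respect to the complex's differentials. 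Translating this rigidity back through $\Phi$ should produce a chain class at position $+\mathbf{a}(w)$ that does not factor through $d^{\mathbf{a}(w)-1}$, giving the required non-vanishing of $\mathrm{Ext}^{\mathbf{a}(w)}(T(w), L(y))$ and hence the lower bound.
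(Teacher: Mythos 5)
Your reduction coincides with the paper's skeleton: adjunction, Theorem~\ref{thm14}, Happel's lemma, and the observation that a homotopy class of morphisms from $T(w_0)$ to the shifted complex is a map into $\mathcal{M}^{\mathbf{a}(w)}$ modulo those factoring through the last differential. But the decisive step --- ruling out null-homotopies --- is precisely the part you leave as an expectation, and the mechanism you sketch does not hold up. First, the structural input is misapplied: Proposition~\ref{prop2} and Corollary~\ref{cor3} concern $\theta_{x^{-1}}L(x)$, and Corollary~\ref{cor75} concerns $M(x,d)=\theta_xL(d)$ with the Duflo involution in the \emph{second} argument, whereas your module is $M(d,w)=\theta_dL(w)$ with $d$ the Duflo involution in the right cell of $w^{-1}$; for general $w$ this is of neither form, so the claimed embedding $P^{\hat{\mathbf{R}}}(e)\hookrightarrow M(d,w)$ and its ``distinguished socle'' are unjustified. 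Second, and more seriously, ``rigidity'' of an extremal-degree class of $M(d,w)$ does not translate through $\Phi$ into what you need: after the shift $\llbracket\mathbf{a}(w)\rrbracket$ (and the graded shifts of $T(w_0)$) the complex is no longer an object of $\mathfrak{LT}$, and the homotopy-category Hom from $T(w_0)$ is not computed by $\Phi$. The plentiful ungraded maps you obtain from socle copies of $L(w_0)$ in $\mathcal{M}^{\mathbf{a}(w)}$ may perfectly well factor through $d^{\mathbf{a}(w)-1}$; nothing in your argument excludes this, and this is exactly where the difficulty sits.

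The paper closes this gap with an ingredient absent from your proposal: a homological characterization of the $\mathbf{a}$-function. By \cite[Section~5]{Ka}, $\mathbf{a}(w)$ is the minimal $i$ with $\Hom_{\mathtt{A}}(P(x)\langle -i\rangle,P(e))\neq 0$ for $x$ in the cell of $w$, and Ringel self-duality (the twisting functor $\mathtt{T}_{w_0}$ followed by $\star$) converts this into the statement that $\mathbf{a}(w)$ is the minimal $i$ with $\Hom_{\mathtt{A}}(T(w_0)\langle -i\rangle,T(w_0x))\neq 0$, equation~\eqref{eq3}. The element $y$ is then \emph{chosen} according to \eqref{eq3} (not fixed in advance as your $y=w_0d$), so that the tilting module $T(w_0x^{-1}w_0)$ sitting in the extreme position $\mathbf{a}(w)$ admits a nonzero map from a specific graded shift of $T(w_0)$ (equation~\eqref{eq55}); and since every tilting summand of $\mathcal{M}^{\mathbf{a}(w)-1}$ has the form $T(z)$ with $z\leq_{LR}w_0x^{-1}w_0$ and $\mathbf{a}$ respects the two-sided order, the Hom from that same shift of $T(w_0)$ into position $\mathbf{a}(w)-1$ vanishes (equation~\eqref{eq555}), so no homotopy exists. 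It is this graded Hom-vanishing, driven by \eqref{eq3} and the two-sided order, that makes the chain map survive in the homotopy category; without it (or an equivalent substitute) your argument does not go through.
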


\begin{proof}
Let $\mathbf{R}$ and $\mathbf{L}$ denote the right and the left cells of 
$w$, respectively, and $\mathbf{T}\in\{\mathbf{R},\mathbf{L}\}$.
Recall (see for example the detailed explanation in \cite[Section~5]{Ka}) 
that 
\begin{displaymath}
\mathbf{a}(w)=
\min\{i\in\mathbb{Z}: \exists x\in \mathbf{T}\text{ s.t. }
\Hom_{\mathtt{A}}(P(x)\langle -i\rangle,P(e))\neq 0\}
\end{displaymath}
(such $x$ will be the Duflo involution in $\mathbf{T}$).
Using the Ringel self-duality of $\mathcal{O}_0$ 
(which accounts to the application of $\mathrm{T}_{w_0}$ followed 
by $\star$) and using \cite{AS} we obtain
\begin{equation}\label{eq3}
\mathbf{a}(w)=
\min\{i\in\mathbb{Z}: \exists x\in \mathbf{T}\text{ s.t. } 
\Hom_{\mathtt{A}}(T(w_0)\langle -i\rangle,T(w_0x))\neq 0\}.
\end{equation}

Consider modules $M(y^{-1}w_0,w)$, where $y\in W$. If
$y$ is such that $y^{-1}w_0$ runs through $\mathbf{R}$, then from
Corollary~\ref{cor75} it follows that the socles of the modules
$M(y^{-1}w_0,w)$ are 
$L(x)\langle -\mathbf{a}(w)\rangle$, $x\in\mathbf{R}$. 

Applying $\Phi^{-1}$ and using Theorem~\ref{thm14}, for $y$ as above
we obtain $\mathcal{M}(w^{-1}w_0,y)^{i}=0$, $i>\mathbf{a}(w)$, while
\begin{displaymath}
\mathcal{M}(w^{-1}w_0,y)^{\mathbf{a}(w)}\cong
T(w_0x^{-1}w_0)\langle\mathbf{a}(w)\rangle,
\end{displaymath}
where $x\in\mathbf{R}$ (\cite[Theorem~3.3]{Ma}). 
By \eqref{eq3} we can choose $y$ such that $y^{-1}w_0\in \mathbf{R}$
and for the corresponding $x$ we have
\begin{equation}\label{eq55}
\Hom_{\mathtt{A}}(T(w_0)\langle -\mathbf{a}(x^{-1}w_0)\rangle,T(w_0x^{-1}w_0))\neq 0. 
\end{equation}
At the same time all tilting summands of 
$\mathcal{M}(w^{-1}w_0,y)^{\mathbf{a}(w)-1}$ have (up to shift)
the form $T(z)$, where $z\leq_{LR} w_0x^{-1}w_0$. As $\mathbf{a}$
respects the two-sided order, we thus get
\begin{equation}\label{eq555}
\Hom_{\mathtt{A}}(T(w_0)\langle -\mathbf{a}(x^{-1}w_0)+1\rangle,T(z))= 0 
\end{equation}
for any such summand. From \eqref{eq55} it follows that there is a
nonzero morphism from $T(w_0)\langle\mathbf{a}(w)-\mathbf{a}(x^{-1}w_0)
\rangle$ to 
$\mathcal{M}(w^{-1}w_0,y)^{\bullet}\llbracket\mathbf{a}(w)\rrbracket$ in
the category of complexes. From \eqref{eq555} it follows that there are
no homotopy from $T(w_0)\langle\mathbf{a}(w)-\mathbf{a}(x^{-1}w_0)
\rangle$ to 
$\mathcal{M}(w^{-1}w_0,y)^{\bullet}\llbracket\mathbf{a}(w)\rrbracket$.
Hence there is  a nonzero
homomorphism from $T(w_0)\langle\mathbf{a}(w)-\mathbf{a}(x^{-1}w_0)\rangle$ to 
$\mathcal{M}(w^{-1}w_0,y)^{\bullet}\llbracket\mathbf{a}(w)\rrbracket$ 
in the homotopy category. Using \cite[Chapter~III(2),Lemma~2.1]{Ha}
and the adjunction, we thus get
\begin{displaymath}
\mathrm{Ext}_{\mathcal{O}}^{\mathbf{a}(w)}
(T(w_0),M(w^{-1}w_0,y))\cong
\mathrm{Ext}_{\mathcal{O}}^{\mathbf{a}(w)}(T(w),L(y))\neq 0.
\end{displaymath}
The claim of the lemma follows.
\end{proof}

\section{Projective dimension of indecomposable injective modules}\label{s4}

In this section we prove the second main result of this paper
(see \cite[Conjecture~15(b)]{Ma2}).

\begin{theorem}\label{thm18}
Let $w\in W$. Then the projective dimension of the module $I(w)$
equals $2\mathbf{a}(w_0w)$.
\end{theorem}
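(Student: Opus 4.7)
The strategy mirrors the proof of Theorem~\ref{thm15}, establishing matching upper and lower bounds for $\mathrm{pd}(I(w))$. The starting point is the identification $I(w) = \theta_w I(e) = \theta_w \nabla(e)$, which follows from $I(e) = \nabla(e) = P(e)^{\star}$ together with the commutativity of projective functors with $\star$ applied to $\theta_w P(e) = P(w)$. Via the adjunction $\theta_w \dashv \theta_{w^{-1}}$ this reduces the computation to
\[
\mathrm{Ext}^i_{\mathcal{O}}(I(w), L(y)) \;\cong\; \mathrm{Ext}^i_{\mathcal{O}}(\nabla(e), M(w^{-1}, y)).
\]
By Theorem~\ref{thm14}, $M(w^{-1}, y)$ is represented in $D^b$ by the linear tilting complex $\mathcal{M}(w^{-1}, y)^{\bullet}$; Theorem~\ref{thm14}\eqref{thm14-2} combined with Proposition~\ref{prop1}\eqref{prop1-3} place its nonzero components in positions $[-\mathbf{a}(w_0 w), \mathbf{a}(w_0 w)]$. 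Self-duality of $M(w^{-1},y)$ (Proposition~\ref{prop1}\eqref{prop1-1}) together with $\nabla(e)^{\star} = \Delta(e) = P(e)$ also provides the dual description $\mathrm{Ext}^i_{\mathcal{O}}(M(w^{-1}, y), \Delta(e))$, to be used freely.

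For the upper bound $\mathrm{pd}(I(w)) \leq 2\mathbf{a}(w_0 w)$ (analog of Lemma~\ref{lem16}), I would represent $\nabla(e)$ itself by a suitable complex of tilting modules, obtained either from the linear tilting coresolution of $\Delta(e)$ via $\star$-duality or from the Koszul-self-duality machinery underlying $\Phi$, and compute $\mathrm{Hom}_{D^b}(\nabla(e), \mathcal{M}(w^{-1}, y)^{\bullet}[i])$ in the homotopy category using \cite[Chapter~III(2), Lemma~2.1]{Ha} and the extension-computation technique of \cite{MO} already invoked in Lemma~\ref{lem16}. The sought factor of two in the bound emerges by combining the position width $2\mathbf{a}(w_0 w)$ of $\mathcal{M}(w^{-1}, y)^{\bullet}$ with the effective position range of non-null-homotopic chain maps from the tilting representative of $\nabla(e)$, the latter being controlled by cell-sensitive Hom-vanishing between tilting summands whose labels lie in two-sided cells incompatible with $w_0 w$.

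For the lower bound $\mathrm{pd}(I(w)) \geq 2\mathbf{a}(w_0 w)$ (analog of Lemma~\ref{lem17}), let $\mathbf{R}$ be the right cell of $w_0 w$ and pick $y$ with $y^{-1}w_0 \in \mathbf{R}$. By Corollary~\ref{cor75} the socle of $\Phi\mathcal{M}(w^{-1}, y)^{\bullet} \cong M(y^{-1}w_0, w_0 w)$ sits in the extremal grading $\mathbf{a}(w_0 w)$, pinpointing an extremal tilting summand $T(z)\langle\mathbf{a}(w_0 w)\rangle$ of $\mathcal{M}(w^{-1}, y)^{\bullet}$ at position $\mathbf{a}(w_0 w)$. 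Analogously to equation~\eqref{eq3}, Ringel self-duality (via $\mathtt{T}_{w_0}$ and $\star$) provides, for an appropriate choice of Duflo involution, a nonzero morphism from $T(w_0)$ (suitably shifted) to this extremal summand together with a partner at the opposite extremal position of the tilting representation of $\nabla(e)$, giving a chain map at homological degree $2\mathbf{a}(w_0 w)$; the invariance of $\mathbf{a}$ on two-sided cells then rules out null-homotopies exactly as in the proof of Lemma~\ref{lem17}, yielding a nonzero class in $\mathrm{Ext}^{2\mathbf{a}(w_0 w)}_{\mathcal{O}}(I(w), L(y))$.

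The main obstacle is the upper bound: refining the naive position-overlap estimate to the correct effective width $2\mathbf{a}(w_0 w)$, despite the fact that standard (injective) resolutions of $\nabla(e)$ a priori extend out to length $2l(w_0)$. This refinement is where the cell-theoretic input is essential and constitutes the genuine new technical step beyond Theorem~\ref{thm15}.
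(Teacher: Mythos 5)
Your setup is fine as far as it goes ($I(w)\cong\theta_w I(e)$, adjunction, and the position range $[-\mathbf{a}(w_0w),\mathbf{a}(w_0w)]$ for $\mathcal{M}(w^{-1},y)^{\bullet}$), but the proof has a genuine gap at exactly the point you flag as ``the main obstacle'': you never produce the upper bound. Your asymmetric use of the adjunction puts \emph{all} of the projective-functor weight on the right-hand argument, leaving $\nabla(e)=I(e)$ on the left. Any complex of tilting modules quasi-isomorphic to $\nabla(e)$ has width independent of $w$ (in fact it is a linear complex stretching over roughly $l(w_0)+1$ positions), so naive position-counting only yields $\operatorname{pd} I(w)\leq \mathbf{a}(w_0w)+l(w_0)$, which is far from $2\mathbf{a}(w_0w)$ when $w$ is close to $w_0$. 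The ``cell-sensitive Hom-vanishing'' you invoke to cut this down is precisely the content that would need to be proved, and you give no argument for it. The paper avoids this entirely by splitting the adjunction \emph{symmetrically}: it shows $I(d)$ is a direct summand of $\theta_d\theta_dI(e)$ (for $d$ the Duflo involution, after reducing to $w=d$ via the two-sided-cell invariance of $\operatorname{pd} I(w)$ from \cite[Theorem~11]{Ma2}), so that $\mathrm{Ext}^i(\theta_d\theta_dI(e),L(y))\cong\mathrm{Ext}^i(\theta_dI(e),\theta_dL(y))$ has a $\theta_d$-image on \emph{each} side; a separate lemma (proved by induction on length using cones) shows that $\theta_dX$ for \emph{any} $X$ is represented by a tilting complex supported in $[-\mathbf{a}(w_0d),\mathbf{a}(w_0d)]$, and then the homotopy-category computation immediately gives the bound $2\mathbf{a}(w_0d)$. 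This symmetric splitting plus the ``$\theta_d$ of anything has bounded tilting width'' lemma is the key idea, and it is absent from your proposal.

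The lower bound is also not carried out. Your sketch conflates the source object: in Lemma~\ref{lem17} the source is the tilting module $T(w_0)$ sitting in a single position, whereas here it would be the full tilting complex of $\nabla(e)$, and you would need to exhibit a specific non-null-homotopic chain map out of its component in position $-\mathbf{a}(w_0w)$, which requires detailed knowledge of that complex that you do not supply. The paper's mechanism is different and cleaner: it uses the self-duality of $\theta_dL(d)=M(d,d)$ together with \cite[Lemma~6 and Corollary~1]{MO} to get $\mathrm{Ext}^{2\mathbf{a}(w_0d)}(\theta_dL(d),\theta_dL(d))\neq 0$, then transfers this to $\theta_d\theta_dI(e)$ via the long exact sequence coming from $0\to\theta_dL(d)\to\theta_d\theta_dI(e)\to\theta_dX\to 0$, and finally isolates the summand $I(d)$ by a descending induction on the two-sided order (the other summands $I(x)$, $x>_{LR}d$, have strictly smaller projective dimension by the inductive hypothesis). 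Both this induction and the reduction to Duflo involutions are essential steps missing from your argument.
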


\begin{proof}
We prove this theorem by a descending induction with respect to the
two-sided order on $W$. Note that the projective dimension of $I(w)$
is an invariant of a two-sided cell by \cite[Theorem~11]{Ma2}. 
If $w=w_0$, the module $I(w_0)$ is projective and thus has 
projective dimension zero, which agrees with our claim. 

Fix $w\in W$, $w\neq w_0$, and assume that the claim of the theorem 
is true for all $x\in W$ such that $x>_{LR}w$.

\begin{lemma}\label{lem19}
The projective dimension of $I(w)$
is at most $2\mathbf{a}(w_0w)$.
\end{lemma}

\begin{proof}
Let $d$ be the Duflo involution in the right cell of $w$. As
projective dimension of $I(w)$ is an invariant of a two-sided cell 
by \cite[Theorem~11]{Ma2}, it is enough to prove the claim in the case
$w=d$. In this proof we consider all modules as ungraded.

Consider the injective module $\theta_{d}\theta_d I(e)$. We have
\begin{equation}\label{eq4}
\mathrm{Hom}_{\mathcal{O}}(L(d),\theta_{d}\theta_d I(e))\cong
\mathrm{Hom}_{\mathcal{O}}(\theta_{d}L(d),\theta_d I(e)).
\end{equation}
By Corollaries~\ref{cor3} and \ref{cor7} we have that 
$\theta_{d}L(d)$ has simple socle $L(d)$ and hence the homomorphism
space from $\theta_{d}L(d)$ to $\theta_d I(e)=I(d)$ is nonzero. From 
\eqref{eq4} it thus follows that $I(d)$ is a direct summand of 
$\theta_{d}\theta_d I(e)$. Hence to prove the statement
of the lemma it is enough to show that the projective dimension of
$\theta_{d}\theta_d I(e)$ is at most $2\mathbf{a}(w_0w)$.

For all $i\in\{0,1,\dots\}$ and all $y\in W$ by adjointness we have
\begin{displaymath}
\mathrm{Ext}^i_{\mathcal{O}}(\theta_{d}\theta_d I(e),L(y))\cong
\mathrm{Ext}^i_{\mathcal{O}}(\theta_d I(e),\theta_d L(y)).
\end{displaymath}
The module $\theta_d L(y)=M(d,y)$ is represented in the derived category
by the complex $\mathcal{M}(d,y)^{\bullet}$. From Theorem~\ref{thm14}
and Proposition~\ref{prop1} it follows that this complex is concentrated
between positions $-\mathbf{a}(w_0d)$ and $\mathbf{a}(w_0d)$.
To proceed we need the following generalization of this observation:

\begin{lemma}\label{lemnewnew}
For any  $X\in \mathcal{O}$ the module $\theta_d X$ is represented in the
derived category by some complex of tilting modules, concentrated 
between positions $-\mathbf{a}(w_0d)$ and $\mathbf{a}(w_0d)$.
\end{lemma}

\begin{proof}
We prove the claim by induction on the length of $X$. For simple
$X$ the claim follows from the last paragraph before this lemma.
To prove the induction step we consider a short exact sequence
\begin{displaymath}
0\to Y \to X \to Z \to 0 
\end{displaymath}
such that $Z$ is simple. Applying $\theta_d$ we get a short exact sequence
\begin{equation} \label{eq777}
0\to \theta_d Y \to \theta_d X \to \theta_d Z \to 0. 
\end{equation}
If $\theta_d Z=0$, then $\theta_d X=\theta_d Y$
and the claim follows from the inductive assumption. 
Otherwise, from the inductive assumption we
have a complex $\mathcal{C}^{\bullet}$ of tilting modules, concentrated 
between positions $-\mathbf{a}(w_0d)$ and $\mathbf{a}(w_0d)$, which
represents $\theta_d Y$. As $Z$ is simple, from the basis of the induction
we have a complex $\mathcal{B}^{\bullet}$ of tilting modules, concentrated 
between positions $-\mathbf{a}(w_0d)$ and $\mathbf{a}(w_0d)$, which
represents $\theta_d Z$. The extension given by 
\eqref{eq777} corresponds to some
morphism from $\mathcal{B}^{\bullet}\llbracket -1\rrbracket$ to
$\mathcal{C}^{\bullet}$ in the homotopy category
(\cite[Chapter~III(2),Lemma~2.1]{Ha}). Taking the cone of 
this morphism we get a complex of tilting modules, concentrated 
between positions $-\mathbf{a}(w_0d)$ and $\mathbf{a}(w_0d)$, which
represents $\theta_d X$. This completes the proof.
\end{proof}

By Lemma~\ref{lemnewnew}, we have that $\theta_d I(e)$ is represented 
in the derived category by some complex of tilting modules, 
concentrated between positions 
$-\mathbf{a}(w_0d)$ and $\mathbf{a}(w_0d)$.

Now for all $i\in\{0,1,\dots\}$ we have
\begin{displaymath}
\mathrm{Ext}^i_{\mathcal{O}}(\theta_d I(e),\theta_d L(y))\cong
\mathrm{Hom}_{\mathcal{D}^b(\mathcal{O})}
(\theta_d I(e),\theta_d L(y)\llbracket i\rrbracket).
\end{displaymath}
If we represent both $\theta_d I(e)$ and $\theta_d L(y)$ by the corresponding
complexes of tilting modules, then the latter morphism space can be computed
already in the homotopy category (\cite[Chapter~III(2),Lemma~2.1]{Ha}).
However, since both complexes are concentrated 
between positions $-\mathbf{a}(w_0d)$ and $\mathbf{a}(w_0d)$,
it follows that for 
$i>2\mathbf{a}(w_0d)$ the corresponding space in the homotopy category is
zero. The claim of the lemma follows.
\end{proof}

\begin{lemma}\label{lem20}
The projective dimension of $I(w)$
is at least $2\mathbf{a}(w_0w)$.
\end{lemma}

\begin{proof}
We again prove the statement in the case $w=d$ (the Duflo involution) 
and work with ungraded modules. Let $X$ denote the cokernel of the natural
inclusion $L(d)\hookrightarrow I(d)\cong \theta_d I(e)$. 
Consider the short exact sequence
\begin{equation}\label{eq8}
0\to \theta_d L(d)\to \theta_d\theta_d I(e)
\to  \theta_d X\to 0.
\end{equation}
From the proof of Lemma~\ref{lem19} we know that the module 
$\theta_d\theta_d I(e)$ contains $I(d)$ as a direct summand.
From \cite[(1)]{Ma2} it follows that the module
$\theta_d\theta_d I(e)$ is a direct sum of injective modules
$I(x)$, where $x\geq_{LR}d$.  If $x>_{L}d$, then from the 
inductive assumption we know that the projective dimension of 
$I(x)$ is strictly less that $2\mathbf{a}(w_0w)$. Hence to prove the 
statement of the lemma it is enough to show that the projective 
dimension of $\theta_{d}\theta_d I(e)$ is at least $2\mathbf{a}(w_0w)$.
To prove this it is enough to show that
\begin{equation}\label{eq5}
\mathrm{Ext}^{2\mathbf{a}(w_0w)}_{\mathcal{O}}
(\theta_d\theta_d I(e),\theta_d L(d))\neq 0.
\end{equation}

The module $\theta_d L(d)$ is self-dual by 
Proposition~\ref{prop1}\eqref{prop1-1}. It is also represented in the
derived category by the complex $\mathcal{M}(d,d)^{\bullet}$, which
is a linear complex of tilting modules and hence does not have trivial direct 
summands. By Theorem~\ref{thm14} and Proposition~\ref{prop1} we know
that the leftmost nonzero position in $\mathcal{M}(d,d)^{\bullet}$
is $-\mathbf{a}(w_0d)$. Hence, by \cite[Lemma~6 and Corollary~1]{MO}
we have 
\begin{equation}\label{eq6}
\mathrm{Ext}^{2\mathbf{a}(w_0w)}_{\mathcal{O}}
(\theta_d L(d),\theta_d L(d))\neq 0.
\end{equation}

Using Lemma~\ref{lemnewnew} one shows  that 
\begin{displaymath}
\mathrm{Ext}^{i}_{\mathcal{O}}
(\theta_d X,\theta_d L(d))= 0
\end{displaymath}
for all $i>2\mathbf{a}(w_0w)$. Hence, applying
$\mathrm{Hom}_{\mathcal{O}}({}_-,\theta_d L(d))$ to the 
short exact sequence \eqref{eq8} and going to
the long exact sequence in homology we obtain a surjection
\begin{displaymath}
\mathrm{Ext}^{2\mathbf{a}(w_0w)}_{\mathcal{O}}
(\theta_d\theta_d I(e),\theta_d L(d))\tto
\mathrm{Ext}^{2\mathbf{a}(w_0w)}_{\mathcal{O}}
(\theta_d L(d),\theta_d L(d)).
\end{displaymath}
Now \eqref{eq5} follows from \eqref{eq6} and completes the proof.
\end{proof}
\end{proof}

Above in the paper we have seen that the modules $M(x,y)$, $x,y\in W$, 
play important role in the combinatorics of the category $\mathcal{O}$.
From this point of view the following problem looks rather natural:

\begin{problem}\label{prob30}
{\rm 
Determine the projective dimension of 
$M(x,y)$ for all $x,y\in W$.
}
\end{problem}

If $x=e$, then $M(x,y)$ is the simple module $L(y)$ and has projective 
dimension $2l(w_0)-l(w)$ (\cite[Proposition~6]{Ma2}). If $y=w_0$, then 
$M(x,y)$ is the tilting module $T(w_0x)$ and has projective dimension
$\mathbf{a}(w_0x)$ (Theorem~\ref{thm15}). In the general case I do not 
have any conjectural formula for the projective dimension of
$M(x,y)$.

\vspace{0.5cm}

\noindent 
Department of Mathematics, Uppsala University, SE-751 06, Uppsala, SWEDEN,
e-mail: {\small \tt mazor@math.uu.se},\\ 
web: http://www.math.uu.se/$\tilde{\hspace{1mm}}$mazor/
\vspace{0.5cm}

\end{document}